\documentclass[leqno,11pt,letterpaper, english]{amsart}
\usepackage[usenames,dvipsnames]{color}
\usepackage[colorlinks=true,linkcolor=Red,citecolor=Green]{hyperref}
\usepackage{amsmath,amssymb,amsthm,graphicx,url}
\usepackage{epstopdf}
\usepackage{color}
\usepackage{dsfont}
\usepackage[T1]{fontenc}
\newcommand{\xqedhere}[1]{%
    \rlap{%
         \hbox to#1{%
           \hfil
           \llap{%
               \ensuremath{\square}
           }%
       }%
   }%
}


\def\pasdegrille{\let\grille = \pasgrille}

\def\aat#1#2#3{
\divide \dimen1 by 48 \dimen3=\dimen1 \multiply \dimen1 by #1
\advance \dimen1 by -\dimen3 \divide \dimen1 by 101 \multiply
\dimen1 by 100 \divide \dimen2 by \count11 \multiply \dimen2 by #2
\setbox0=\hbox{#3}\ht0=0pt\dp0=0pt
  \rlap{\kern\dimen1 \vbox to0pt{\kern-\dimen2\box0\vss}}\dimen1= \wd1
\dimen2=\ht1}
\def\pasgrille{
\count12= \dimen1 \divide \count12 by 50 \divide \dimen2 by
\count12 \count11 =\dimen2 \ \divide \dimen1 by 48
\setlength{\unitlength}{\dimen1} \smash{\rlap{\ }} \dimen1= \wd1
\dimen2=\ht1 }
\def\grille{
\count12= \dimen1 \divide \count12 by 50 \divide \dimen2 by
\count12 \count11 =\dimen2 \ \divide \dimen1 by 48
\setlength{\unitlength}{\dimen1}
\smash{\rlap{\graphpaper[1](0,0)(50, \count11)}} \dimen1= \wd1
\dimen2=\ht1 }

\pasdegrille


\DeclareGraphicsRule{*}{mps}{*}{}

\setlength{\textheight}{8in} \setlength{\oddsidemargin}{0.0in}
\setlength{\evensidemargin}{0.0in} \setlength{\textwidth}{6.4in}
\setlength{\topmargin}{0.18in} \setlength{\headheight}{0.18in}
\setlength{\marginparwidth}{1.0in}
\setlength{\abovedisplayskip}{0.2in}

\setlength{\belowdisplayskip}{0.2in}

\setlength{\parskip}{0.05in}
\pagestyle{headings}

\newcommand{\be}{\begin{equation}}
\newcommand{\ee}{\end{equation}}

\theoremstyle{plain}

\newtheorem{thm}{Theorem}

\newtheorem{prop}{Proposition}[section]
\newtheorem{cor}[prop]{Corollary}
\newtheorem{lem}[prop]{Lemma}

\theoremstyle{definition}

\newtheorem{rem}[prop]{Remark}

\numberwithin{equation}{section}

\def\squarebox#1{\hbox to #1{\hfill\vbox to #1{\vfill}}}

\usepackage{amsxtra}

\ifx\pdfoutput\undefined
  \DeclareGraphicsExtensions{.pstex, .eps}
\else
  \ifx\pdfoutput\relax
    \DeclareGraphicsExtensions{.pstex, .eps}
  \else
    \ifnum\pdfoutput>0
      \DeclareGraphicsExtensions{.pdf}
    \else
      \DeclareGraphicsExtensions{.pstex, .eps}
    \fi
  \fi
\fi

\title[Propagation of smallness and control]{Propagation of smallness\\ and control for heat equations}

\author[N. Burq]{Nicolas Burq}
\address{Universit{\'e} Paris-Saclay, Math{\'e}matiques, UMR 8628 du CNRS, B{\^a}t 307, 91405  Orsay Cedex, France,   and Institut Universitaire de France}
\email{Nicolas.burq@universite-paris-saclay.fr}
\author[I. Moyano]{Iv\'an Moyano}
\address{Universit\'e de Nice Sophia-Antipolis
Parc Valrose, Laboratoire J.A. Dieudonn\'e,
UMR 7351 du CNRS
06108 NICE Cedex 02
FRANCE}
\email{Ivan.Moyano@unice.fr}

\usepackage{amssymb}
\usepackage{amsmath, amsthm, amsopn, amsfonts}

\def\11{{\rm 1~\hspace{-1.4ex}l} }
\def\R{\mathbb R}

\def\N{\mathbb N}

\begin{document}

\begin{abstract}
In this note we investigate propagation of smallness properties for solutions to heat equations. We consider spectral projector estimates for the Laplace operator with Dirichlet or Neumann boundary conditions on a Riemanian manifold with or without boundary. We show that using the new approach for the propagation of smallness from Logunov-Malinnikova~\cite{Lo18, Lo18-1, LoMa17}  allows to extend the spectral projector type estimates from Jerison-Lebeau~\cite{JeLe99} from localisation on open set to localisation on arbitrary sets of non zero Lebesgue measure; we can actually go beyond and consider sets of non vanishing $d- \delta$ ($\delta >0$ small enough) Hausdorff measure. We show that these new spectral projector estimates  allow to extend the Logunov-Malinnikova's propagation of smallness results to solutions to heat equations. Finally we apply these results to the null controllability of heat equations with controls localised on sets of positive Lebesgue measure. A main novelty here with respect to previous results is that we can drop the constant coefficient assumptions (see~\cite{AEWZ14, ApEs13}) of the Laplace operator (or analyticity assumption, see~\cite{EsMoZh17,LeMo19}) and deal with Lipschitz coefficients. Another important novelty is that we get the first (non one dimensional) exact controllability results with controls supported on zero-measure sets. \\
 \begin{center} \rule{0.6\textwidth}{.4pt} \end{center}\end{abstract}   

\ \vskip -1cm \noindent\hfil\rule{0.9\textwidth}{.4pt}\hfil \vskip 1cm 
 \maketitle   
 \section{Introduction}  
We are interested in this note in understanding the propagation of smallness and control for solutions to heat equations and their connections with the propagation of smallness for  high frequency sum of eigenfunctions of the Laplace operator on a  { compact} Riemanian manifold $(M,g)$ with boundary. Let $\Delta$  be the  Laplace-Beltrami operator on $M$\footnote{We will simply denote by $\Delta$, without emphasising the dependence on the metric $g = g(x)$, the variable coefficient operator $\Delta = \frac{1}{\sqrt{\det g}} \partial_i \left( \sqrt{\det g} g^{ij} \partial_j \right)$ or the more general operator defined in \eqref{delta}.} and let $(e_k)$ be a family of eigenfunctions of $- \Delta$, with eigenvalues $\lambda_k^2 \rightarrow + \infty$ forming a Hilbert basis of $L^2(M)$. 
$$ -\Delta e_k = \lambda_k ^2 e_k, \qquad e_k \mid_{\partial M} =0 \text{ (Dirichlet condition) or } \partial_\nu e_k \mid_{\partial M} =0 \text{ (Neumann condition)}.$$
Now, we consider any arbitary finite linear combination of the form
$$\phi= \sum_{\lambda_k \leq \Lambda} u_k e_k (x), $$
and given a small subset $E\subset M$ (of positive Lebesgue measure or at least not too small in a sense to be made precise later), 
we want to understand how $L^p$ norms of the restrictions of $\phi$ on the set $E$ dominate Sobolev norms of $\phi$ on $M$. 

In the case of domains and constant coefficient Laplace operator and subsets of positive Lebesgue measures, or in the case of Lipschitz metrics and open subsets $E$, this is now quite well understood~\cite{AEWZ14, JeLe99}. Here we shall be interested in the two cases where  $M$ is a $W^{2, \infty}$ compact manifold of dimension $d$ with or without boundary (endowed with a Lipschitz metric) and observation domains $E$ of positive Lebesgue measure or even of positive $(d- \delta)$- dimensional Hausdorff content for $\delta >0$ small enough, but depending only on the dimension of the manifold $M$.

Here and below by $W^{2, \infty}$  manifolds, we mean that the change of charts are $C^1 \cap W^{2, \infty}$ maps ($C^1$ with  second order distribution derivatives bounded a.e. or equivalently the derivatives of the change of charts are Lipschitz functions).
We allow slightly more general operators than Laplace-Beltrami operators and assume that $M$ is endowed with a Lipschitz (definite positive) metric $g$ and a Lipschitz (positive) density $\kappa$. Let 
\begin{equation}\label{delta}
 \Delta = \frac 1 {\kappa(x)} \text{div} g^{-1}(x) \kappa(x) \nabla_x= \text{div}_\kappa \nabla_g
 \end{equation} be the corresponding Laplace operator. When $\kappa(x) = \sqrt{\det g(x)}$, we recover the usual Laplace-Beltrami operator on $(M,g)$.

In all the results below, the manifold $M$ will be assumed to satisfy the $W^{2, \infty}$ regularity above and unless stated explicitely otherwise, $\Delta$ stands for the operator defined by~\eqref{delta} with Dirichlet or Neumann boundary condition if $\partial M \neq \emptyset $.
Recall that the $d$-Hausdorff content (or measure) of a set $E\subset \mathbb{R}^n$ is 
$$ \mathcal{C}_{\mathcal{H}}^d (E) = \inf \{ \sum_j r_j ^d; E \subset \cup_jB(x_j, r_j)\},$$
and the Hausdorff dimension of $E$ is defined as 
$$ \text{dim}_\mathcal{H} (E) = \inf \{d;   \mathcal{C}_{\mathcal{H}}^d(E) =0 \}.$$
We shall denote by $|E|$ the Lebesgue measure of the set $E$. Let us recall that the Hausdorf content of order $n$ is equivalent to the Lebesgue measure, 
$$ \exists C_d, c_d>0; \forall A \text{ borelian set}, c_d |A| \leq \mathcal{C}^d (A) \leq C_d |A|,$$
and  
\begin{equation}\label{equivalence}
  \mathcal{C}_{\mathcal{H}}^d (E)>0 \Rightarrow  \forall d'<d, \mathcal{C}_{\mathcal{H}}^{d'} (E)\geq \inf( 1,  \mathcal{C}_{\mathcal{H}}^d (E))
  \end{equation}
  
 (indeed, $\sum_j r_j ^{d'} \geq 1$ if there exists $i_0$ such that $r_{i_0} \geq 1$, and otherwise, $\sum_j r_j ^{d'} \geq \sum_j r_j ^{d}$).
 
The value of the Hausdorff content is not invariant by diffeomorphisms, but the Hausdorff dimension is invariant by Lipschitz diffeormorphisms, as shown by 
\begin{prop}\label{invar}
Let $\phi: \mathbb{R}^n \rightarrow \mathbb{R}^n$ be a Lipschitz diffeomorphism, such that 
\begin{equation}
\label{lips}
 \| \nabla_x (\phi) \|_{L^\infty} \leq C.
 \end{equation}
Then, for any $\sigma > 0$,
\begin{equation}
 \mathcal{C}_{\mathcal{H}}^{\sigma} (E) >m \Rightarrow 
 \mathcal{C}_{\mathcal{H}}^{\sigma} (\phi(E)) >C^{-\sigma}m.
 \end{equation}
 \end{prop}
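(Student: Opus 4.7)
The plan is to apply a short covering argument built on the elementary fact that a Lipschitz map with constant $L$ sends any ball $B(x,r)$ into $B(f(x),Lr)$, so ball covers transfer between source and target with radii scaled by $L$.

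Concretely, I would argue by contrapositive on the content. Given $\varepsilon > 0$, first pick a countable cover $\phi(E) \subset \bigcup_j B(y_j, r_j)$ achieving $\sum_j r_j^\sigma \le \mathcal{C}_{\mathcal{H}}^\sigma(\phi(E)) + \varepsilon$, which exists by the definition of the Hausdorff content as an infimum. Next, pull this cover back through $\phi^{-1}$, whose Lipschitz constant is controlled by $C$ as a consequence of the hypothesis~\eqref{lips} and the fact that $\phi$ is a diffeomorphism: each preimage $\phi^{-1}(B(y_j, r_j))$ sits inside $B(\phi^{-1}(y_j), C r_j)$. This yields an admissible cover of $E$ of total $\sigma$-weight $C^\sigma \sum_j r_j^\sigma$, hence $\mathcal{C}_{\mathcal{H}}^\sigma(E) \le C^\sigma(\mathcal{C}_{\mathcal{H}}^\sigma(\phi(E)) + \varepsilon)$. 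Letting $\varepsilon \to 0$ and rearranging gives $\mathcal{C}_{\mathcal{H}}^\sigma(\phi(E)) \ge C^{-\sigma}\mathcal{C}_{\mathcal{H}}^\sigma(E) > C^{-\sigma} m$.

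There is no real obstacle: the argument is the standard bi-Lipschitz invariance computation for Hausdorff contents, and the assumption that $\phi$ is a global diffeomorphism of $\mathbb{R}^n$ removes any technicality about the preimages being actual subsets of balls. The only bookkeeping point worth flagging is the direction of the Lipschitz control: to bound the content of the image from below, one must transport covers of the image back through $\phi^{-1}$, so the relevant constant is really the Lipschitz constant of $\phi^{-1}$, absorbed here into the single constant $C$ via the diffeomorphism hypothesis.
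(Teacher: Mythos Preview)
Your covering argument is exactly the right one, and it is the same idea as the paper's: transport ball covers through a Lipschitz map and track how the radii scale. In fact your version is more careful than the paper's own write-up. The paper starts from a cover of $E$, pushes it forward through $\phi$, and obtains
\[
\mathcal{C}_{\mathcal{H}}^{\sigma}(E) \ \geq\ C^{-\sigma}\,\mathcal{C}_{\mathcal{H}}^{\sigma}(\phi(E)),
\]
which is the \emph{reverse} inequality to the one stated and does not by itself give $\mathcal{C}_{\mathcal{H}}^{\sigma}(\phi(E)) > C^{-\sigma} m$. You instead start from a cover of $\phi(E)$ and pull it back through $\phi^{-1}$, which is the correct direction for the claimed lower bound.

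Your closing remark is also on point and worth making explicit: the hypothesis $\|\nabla\phi\|_{L^\infty}\le C$ bounds the Lipschitz constant of $\phi$, not of $\phi^{-1}$, so strictly speaking the constant $C^{-\sigma}$ in the conclusion should involve the Lipschitz constant of $\phi^{-1}$. In the paper's applications the maps are $W^{2,\infty}$ chart diffeomorphisms, so both $\phi$ and $\phi^{-1}$ are Lipschitz and one may harmlessly absorb both constants into a single $C$; but as a standalone statement the proposition needs either $\|\nabla(\phi^{-1})\|_{L^\infty}\le C$ in the hypothesis or the conclusion rewritten with the inverse's Lipschitz constant. Your proof is correct once that is understood.
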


\begin{proof}

 Indeed, assume that 
 $ E \subset \cup_j B(x_j, r_j)$. Then $\phi(E) \subset  \cup_j \phi(B(x_j, r_j))$. 
 But, according to~\eqref{lips}, with $y_j = \phi (x_j)$, we have 
  $$ \| \phi(x) - \phi(y_j) \| \leq C \| x- x_j\| \Rightarrow \phi(B(x_j, r_j)) \subset B(\phi(x_j), Cr_j), $$
  As a consequence, 
 \begin{multline*}
  \mathcal{C}_{\mathcal{H}}^\sigma (E) = \inf \{ \sum_j r_j^\sigma; E \subset \cup_jB(x_j, r_j)\}\\
  \geq C^{-\sigma }  \inf \{ \sum_j r_j ^\sigma; \phi (E) \subset \cup_jB(y_j, Cr_j)\}
  = C^{-\sigma } \mathcal{C}_{\mathcal{H}}^\sigma (\phi (E)).
  \end{multline*}

\end{proof}

Our first result is the following generalisation of Jerison-Lebeau's work~\cite{JeLe99}.
\begin{thm}\label{spectral}
There exists $\delta\in (0,1)$ (depending only on the dimension of the manifold $M$) such that for any $m>0$, there exists $C, D>0$ such that for any $E_1 \subset M$ with $ |E_1| \geq m$,  any $E_2\subset M$ satisfying 
\begin{equation}\label{cont-haus}
 \mathcal{C}_{\mathcal{H}}^{d- \delta} (E_2) >m,
 \end{equation}
and for any $\Lambda >0$,   
we have
\begin{equation}\label{BORNE}
\phi= \sum_{\lambda_k \leq \Lambda} u_k e_k (x) \Rightarrow \| \phi\|_{L^\infty(M)} \leq C e^{D \Lambda} \| \phi 1_{E_1}\|_{L^1(M)},
\end{equation}
\begin{equation}\label{BORNERAF}
\phi= \sum_{\lambda_k \leq \Lambda} u_k e_k (x) \Rightarrow \| \phi\|_{L^\infty(M)}  \leq C e^{D \Lambda} \sup_{x \in E_2} |\phi (x)|.
\end{equation}

\end{thm}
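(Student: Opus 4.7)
The plan is to follow Jerison-Lebeau's approach \cite{JeLe99} of lifting $\phi$ to an elliptic extension, but to replace their three-ball iteration from an open set by the Logunov-Malinnikova \cite{Lo18, Lo18-1, LoMa17} propagation of smallness from a set of positive Lebesgue measure (for \eqref{BORNE}), respectively positive $d-\delta$ Hausdorff content (for \eqref{BORNERAF}).

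\textbf{Elliptic extension and global bound.} Set $F(x,t) := \sum_{\lambda_k \le \Lambda} u_k e_k(x)\cosh(\lambda_k t)$ on $\widetilde M := M\times(-T,T)$ for a small fixed $T>0$. Then $F|_{t=0}=\phi$, $\partial_t F|_{t=0}=0$, and $F$ solves the divergence-form elliptic equation $\partial_t(\kappa\partial_t F)+\mathrm{div}_x(\kappa g^{-1}\nabla_x F)=0$ on $\widetilde M$ with Lipschitz coefficients and inherited Dirichlet/Neumann conditions on $\partial M\times(-T,T)$. Parseval combined with Bernstein yields the a priori bound $\|F\|_{L^\infty(\widetilde M)}\le C\Lambda^{d/2}e^{\Lambda T}\|\phi\|_{L^2(M)}$. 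The lateral boundary $\partial M\times(-T,T)$ can be handled either by a local reflection (which preserves Lipschitz regularity of the metric since $M$ is $W^{2,\infty}$) or directly by the boundary Carleman estimates of \cite{JeLe99}.

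\textbf{Propagation of smallness and transfer to the slice $t=0$.} Iterating the LM propagation-of-smallness estimate along a finite chain of balls covering $M\times[-T/2,T/2]\subset \widetilde M$ produces, for any measurable $\widetilde E\subset\widetilde M$ of positive $(d+1-\delta)$-Hausdorff content, an inequality
\[
\|F\|_{L^\infty(M\times[-T/2,T/2])} \le C\,\|F\|_{L^\infty(\widetilde E)}^{\alpha}\,\|F\|_{L^\infty(\widetilde M)}^{1-\alpha},
\]
for some $\alpha\in(0,1)$ depending on the content, on $\delta$, and on the doubling index of $F$ (itself bounded by $C\Lambda$ as in \cite{JeLe99}). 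For \eqref{BORNERAF}, pick $\widetilde E := E_2\times[-\tau,\tau]$; its $(d+1-\delta)$-Hausdorff content is $\ge c\tau m$, since any cover of $E_2$ by balls $B(x_j,r_j)$ lifts to a cover of $\widetilde E$ by $\lceil\tau/r_j\rceil$ balls of radius $r_j$. Since $\partial_t^2 F=-\Delta F$, Taylor expansion in $t$ together with Bernstein gives
\[
|F(x,t)-\phi(x)|\le \tfrac{t^2}{2}\|\partial_t^2 F\|_{L^\infty(\widetilde M)}\le C\tau^2\Lambda^2 e^{\Lambda T}\|\phi\|_{L^2(M)},
\]
so that $\|F\|_{L^\infty(\widetilde E)}\le \sup_{E_2}|\phi|+C\tau^2\Lambda^2e^{\Lambda T}\|\phi\|_{L^2(M)}$. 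Inserting this into the propagation-of-smallness inequality, using the elementary bound $\|\phi\|_{L^2(M)}\le \vol(M)^{1/2}\|\phi\|_{L^\infty(M)}\le \vol(M)^{1/2}\|F\|_{L^\infty(M\times[-T/2,T/2])}$, and choosing $\tau$ small enough to absorb the error term into the left-hand side yields the claimed bound \eqref{BORNERAF}. Estimate \eqref{BORNE} is obtained in the same way from the $L^2$ variant of the LM propagation (with Lebesgue measure of $E_1$ replacing the Hausdorff content hypothesis), together with Cauchy-Schwarz to pass from $\|\phi\mathbf{1}_{E_1}\|_{L^2}$ to $\|\phi\mathbf{1}_{E_1}\|_{L^1}$.

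\textbf{Main obstacle.} The crucial technical point is the quantitative dependence of the propagation-of-smallness exponent $\alpha$ on the doubling index $N\lesssim\Lambda$ of $F$ and on the Hausdorff content of $\widetilde E$, which itself collapses as $\tau$ is shrunk to absorb the Taylor remainder. Obtaining a final factor of the form $e^{D\Lambda}$, rather than a super-exponential one, requires LM's estimate to degrade only mildly in the doubling index, essentially giving $(1-\alpha)/\alpha\lesssim N\lesssim\Lambda$ at the end of the covering chain, a feature that rests on the quantitative Carleman-type estimates of \cite{LoMa17}. A secondary subtlety is the presence of $\partial M$ and of the boundary conditions in the LM framework, handled by the local reflection discussed above using the $W^{2,\infty}$ structure of the manifold.
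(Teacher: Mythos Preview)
Your strategy has a genuine gap in the transfer step, and it is exactly the obstacle you flag but do not resolve. With the $\cosh$ extension $F(x,t)=\sum u_k\cosh(\lambda_k t)e_k(x)$, controlling $\sup_{\widetilde E}|F|$ by $\sup_{E_2}|\phi|$ via the Taylor remainder forces $\tau^2\Lambda^{d/2+2}e^{T\Lambda}\ll 1$, hence $\tau\lesssim e^{-c\Lambda}$. The $(d+1-\delta)$-content of $\widetilde E=E_2\times[-\tau,\tau]$ is then $\sim\tau m\sim e^{-c\Lambda}$, and the Logunov--Malinnikova Remez-type inequality for solutions with doubling index $N$ reads $\sup_{\frac12 B}|F|\le (C|B|/|\widetilde E|)^{N}\sup_{\widetilde E}|F|$ (or, in the Hausdorff-content version, the exponent $\alpha$ degenerates like an inverse power of the content). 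With $N\lesssim\Lambda$ this produces a factor $(e^{c\Lambda})^{C\Lambda}=e^{c'\Lambda^2}$, not $e^{D\Lambda}$. Keeping $\tau$ fixed instead makes the Taylor error comparable to $\|F\|_{L^\infty(\Omega)}$ and the three-term inequality becomes vacuous. So the absorption step cannot close as written.

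The paper avoids this entirely by a different choice of extension and a different LM theorem. It takes
\[
u(t,x)=\sum_{\lambda_k\le\Lambda}u_k\,\frac{\sinh(\lambda_k t)}{\lambda_k}\,e_k(x),
\]
so that $u|_{t=0}=0$, $\nabla_x u|_{t=0}=0$, and $\partial_t u|_{t=0}=\phi$. On the \emph{unthickened} slice $E=\{0\}\times E_2$ one has $\nabla_{t,x}u=(\phi,0)$ exactly, with no Taylor error. Since $E$ has $(n-1-\delta)$-Hausdorff content $>m'$ in the ambient dimension $n=d+1$ (with $m'$ depending only on $m$ and the chart), the \emph{gradient} propagation-of-smallness \cite[Theorem~5.1]{LoMa17} applies with an exponent $\alpha$ independent of $\Lambda$. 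The only $\Lambda$-dependence then enters through the global bound $\sup_\Omega|\nabla_{t,x}u|\le Ce^{c\Lambda}\|\phi\|_{L^2}$ and through the Jerison--Lebeau open-set spectral estimate used to pass from a coordinate patch to all of $M$; both contribute $e^{D\Lambda}$ only. This is the key idea you are missing: use the $\sinh$ lift and the gradient version of LM on the slice, rather than the $\cosh$ lift and the function version on a thickened set.

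Two smaller points. First, the paper does not treat \eqref{BORNE} by a separate LM argument: it deduces \eqref{BORNE} from \eqref{BORNERAF} by the elementary observation that if $|E_1|>0$ then the subset of $E_1$ where $|\phi|$ falls below the threshold $(2C')^{-1}e^{-D\Lambda}\|\phi\|_{L^2}$ must have measure zero (else \eqref{BORNERAF} would be violated), so $\int_{E_1}|\phi|\gtrsim e^{-D\Lambda}\|\phi\|_{L^2}$. Your Cauchy--Schwarz remark goes the wrong way. Second, the ``local reflection'' at $\partial M$ is not immediate at this regularity: geodesic normal coordinates require a $C^2$ metric, which is not available. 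The paper devotes an entire section to constructing a $W^{2,\infty}$ double manifold via a pseudo-geodesic coordinate system (regularising the normal field with a Poisson-type smoothing), and this is what makes the reflection preserve Lipschitz regularity of the coefficients.
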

\begin{rem}
The assumption~\eqref{cont-haus} is {\em not} invariant by change of variables. It has to be understood in a fixed local chart (and we shall prove Theorem~\ref{spectral} in a chart). Taking $0<\delta' <\delta$, we could have replaced it by $\text{dim}_{\mathcal{H}}(E) > d - \delta'$ (which implies $\mathcal{C}^{d- \delta}_{\mathcal{H}} > 0$ and is invariant by Lipshitz diffeomorphisms). Of course replacing $\delta >0$ by any $0<\delta'<\delta$ does not change substantially the final result (as we have no control on the actual value of the constant $\delta$).  For the sake of consistency with~\cite{Lo18, Lo18-1, LoMa17} we kept~\eqref{cont-haus}. 
\end{rem}

\begin{rem} Notice that in Theorem~\ref{spectral} no assumption is made on the set $E_2$ other than the positivity of the Hausdoff content. This implies that in the presence of a boundary, the estimate (\ref{BORNERAF}) also holds when $E_2$ is concentrated arbitrarily close to $\partial M$ (with uniform constants).
\end{rem}
As a consequence of these spectral projector estimates we deduce the following observability estimates and controllability results for the heat equation.

\begin{thm} [Null controllability from sets of positive measure]\label{control}
Let $F \subset (0,T) \times M$ of positive Lebesgue measure. 
Then, there exists $C>0$ such that for any $u_0 \in L^2(M)$ the solution 
$u = e^{t \Delta} u_0$ to the heat equation 
$$\partial _t u - \Delta u =0,     u \mid_{\partial M} =0 \text{ (Dirichlet condition) or } \partial_\nu u \mid_{\partial M} =0 \text{ (Neumann condition)}, u \mid_{t=0} = u_0,
$$ satisfies (recall that $\kappa$ is defined in~\eqref{delta}):
\begin{equation}\label{obs} 
\| e^{T \Delta } u_0 \|_{L^2(M)} \leq C \int_F | u|(t,x) \kappa(x)dx dt.
\end{equation}
As a consequence, for all $u_0, v_0 \in L^2(M)$ there exists $f\in L^\infty(F)$ such that the solution to 
\begin{equation}
\begin{gathered}
(\partial_t - \Delta ) u = f1_{F} (t,x), \quad u \mid_{t=0} = u_0,\\
u \mid_{\partial M} =0 \text{ (Dirichlet condition) or } \partial_\nu u \mid_{\partial M} =0 \text{ (Neumann condition)},
\end{gathered}
\end{equation}
satisfies 
$$ u \mid_{t=T} =e^{T\Delta} v_0.$$
\end{thm}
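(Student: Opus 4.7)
The plan is to establish the observability inequality~\eqref{obs} first, and then deduce the controllability assertion by the Hilbert Uniqueness Method (HUM). The proof of \eqref{obs} follows the Lebeau--Robbiano telescoping scheme, the new ingredient being that Theorem~\ref{spectral} allows the observation to be taken on arbitrary sets of positive Lebesgue measure rather than on open sets.

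\emph{From $F$ to time slices.} Since $|F|>0$, Fubini yields a constant $m_0>0$ and a subset $E\subset(0,T)$ of positive Lebesgue measure such that $|F_t|\geq m_0$ for every $t\in E$, where $F_t:=\{x\in M:(t,x)\in F\}$. Fixing a Lebesgue density point $t^*\in E$, I extract a decreasing sequence $t_k=t^*+2^{-k}r_0\searrow t^*$ (with $r_0$ small enough) such that $|E\cap[t_{k+1},t_k]|\geq\tfrac12(t_k-t_{k+1})$ for every $k\geq 0$.

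\emph{Interpolation and telescoping.} Let $\Pi_\Lambda$ denote the orthogonal projection onto the eigenmodes of $-\Delta$ with $\lambda_k\leq\Lambda$. For $t\in E$, the spectral estimate~\eqref{BORNE} applied with $E_1=F_t$ gives
$$\|\Pi_\Lambda u(t)\|_{L^2(M)}\leq Ce^{D\Lambda}\Bigl(\int_{F_t}|u(t,x)|\kappa(x)\,dx+\|(I-\Pi_\Lambda)u(t)\|_{L^2(M)}\Bigr),$$
with $C,D$ depending only on $m_0$. Combined with the high-frequency dissipation $\|(I-\Pi_\Lambda)u(t)\|_{L^2(M)}\leq e^{-\Lambda^2(t-s)}\|u(s)\|_{L^2(M)}$ for $s<t$, the triangle inequality, the contractivity of $s\mapsto\|u(s)\|_{L^2(M)}$, and integration in $t$ over $E\cap[t_{k+1},t_k]$, I derive an interpolation inequality of the form
$$\|u(t_k)\|_{L^2(M)}\leq C_k e^{D\Lambda}\int_{F\cap(M\times[t_{k+1},t_k])}|u|\kappa\,dx\,dt+C_k e^{D\Lambda-c\Lambda^2(t_k-t_{k+1})}\|u(t_{k+1})\|_{L^2(M)},$$
with $C_k$ polynomial in $1/(t_k-t_{k+1})$. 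Choosing $\Lambda=\Lambda_k=M/(t_k-t_{k+1})$ with $M$ large enough that $cM^2-2DM>0$, iterating and using the geometric spacing of the $t_k$, the resulting telescoping series converges and yields $\|u(t_0)\|_{L^2(M)}\leq C\int_F|u|\kappa\,dx\,dt$; the contractivity $\|e^{T\Delta}u_0\|_{L^2(M)}\leq\|u(t_0)\|_{L^2(M)}$ then gives \eqref{obs}.

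\emph{Duality and main obstacle.} Observability~\eqref{obs} is an $L^2\to L^1(F,\kappa\,dx\,dt)$ bound for the adjoint heat semigroup (which coincides, after time reversal, with $e^{t\Delta}$ by self-adjointness of $\Delta$). The standard $L^1$--$L^\infty$ HUM argument, minimising a strictly convex coercive functional on $L^2(M)$ whose control cost is $\|\cdot\|_{L^\infty(F)}$, then produces an $f\in L^\infty(F)$ steering $u_0$ to $e^{T\Delta}v_0$ at time $T$. The main obstacle is the absence of any product structure on $F$: this is overcome by the Fubini-plus-density reduction to positive-measure slices $F_t$, which works precisely because Theorem~\ref{spectral} supplies the spectral inequality for arbitrary sets of positive Lebesgue measure, removing any openness, connectedness or regularity assumption on the slices.
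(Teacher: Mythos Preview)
Your proposal is correct and follows essentially the same route as the paper: Fubini to slice $F$ into positive-measure time sections $F_t$, a density-point construction of a dyadic sequence $t_k$ (this is precisely the Phung--Wang lemma the paper quotes), the spectral estimate~\eqref{BORNE} plus high-frequency dissipation to get an interpolation inequality on each interval $[t_{k+1},t_k]$, a telescoping sum, and then HUM/duality for the control (which the paper explicitly leaves to the reader as classical). The only cosmetic difference is that the paper first packages the low/high-frequency splitting into a multiplicative interpolation inequality (Theorem~\ref{interpo1}) and then passes to the additive telescoping form via Young's inequality (Corollary~\ref{cor44bis}), whereas you keep $\Lambda$ explicit and optimise it step by step; the two presentations are equivalent.
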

\begin{thm}[Observability and exact controllability from zero measure sets]\label{control-haus} 
There exists $\delta\in (0,1)$ (depending only on the dimension of the manifold $M$) which depends only on the dimension of the manifold $M$, such that for any  $E \subset M$ of positive  ($d- \delta$) dimensional Hausdorff content, and any $J\subset (0,T) $ of positive Lebesgue measure,  there exists $C>0$ such that  for any $u_0 \in L^2(M)$ the solution 
$u = e^{t \Delta} u_0$ to the heat equation 
$$\partial _t u - \Delta u =0,     u \mid_{\partial M} =0 \text{ (Dirichlet condition) or } \partial_\nu u \mid_{\partial M} =0 \text{ (Neumann condition)},
$$ satisfies 
\begin{equation}\label{obsbis} 
\| e^{T \Delta } u_0 \|_{L^2(M)} \leq C \int_{J} \sup_{x\in E} | u|(t,x)  dt.
\end{equation}
As a consequence, under the additional assumption that $E$ is a closed subset of $M$, for all $u_0 , v_0 \in L^2(M)$ there exists $\mu $ a Borel measure supported on $(0,T) \times E$ such that  the solution to 
\begin{equation}\label{eq-mesure thm}
\begin{gathered}
(\partial_t - \Delta ) u = \mu (t,x)1_{J \times E}, \quad u \mid_{t=0} = u_0,\\
u \mid_{\partial M} =0 \text{ (Dirichlet condition) or } \partial_\nu u \mid_{\partial M} =0 \text{ (Neumann condition)},
\end{gathered}
\end{equation}
satisfies 
$$ u \mid_{t\geq T} =e^{t\Delta} v_0.$$
\end{thm}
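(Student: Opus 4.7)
The strategy is to derive the observability estimate \eqref{obsbis} by the Lebeau--Robbiano spectral cut-off scheme, using Theorem~\ref{spectral} as the low-frequency ingredient in place of the classical Jerison--Lebeau spectral inequality, and then to obtain the controlling measure by Fenchel--Rockafellar duality in a non-Hilbertian setting.

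For $\Lambda>0$ let $\Pi_\Lambda$ denote the spectral projector onto $\{\lambda_k\le\Lambda\}$. Applying \eqref{BORNERAF} to $\Pi_\Lambda e^{t\Delta} u_0$ and bounding its $L^2(M)$-norm by $|M|^{1/2}$ times its $L^\infty(M)$-norm yields
\begin{equation*}
\|\Pi_\Lambda e^{t\Delta} u_0\|_{L^2(M)} \leq C e^{D\Lambda}\sup_{x\in E}|\Pi_\Lambda e^{t\Delta} u_0(x)|.
\end{equation*}
Writing $\Pi_\Lambda=\Id-(\Id-\Pi_\Lambda)$ and combining with the parabolic smoothing $\|(\Id-\Pi_\Lambda)e^{t\Delta}u_0\|_{L^\infty}\leq Ct^{-d/4}e^{-c\Lambda^2 t}\|u_0\|_{L^2}$ (a direct consequence of $L^2\to L^\infty$ heat bounds and spectral localisation), one obtains, for $t$ of order $r$ and $\Lambda$ to be chosen,
\begin{equation*}
\|e^{t\Delta}u_0\|_{L^2}^2 \leq C e^{2D\Lambda}\bigl(\sup_{x\in E}|e^{t\Delta}u_0(x)|\bigr)^2 + Ce^{-c\Lambda^2 r}\|u_0\|_{L^2}^2.
\end{equation*}
I would then run the telescoping argument, balancing $e^{D\Lambda_k}$ against $e^{-c\Lambda_k^2 r_k}$ by choosing $\Lambda_k\sim r_k^{-1}$. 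Since $J$ has only positive Lebesgue measure I would fix a Lebesgue density point $t_0\in(0,T)$ of $J$, pick a geometric sequence $r_k=\rho^k$ with $\rho\in(0,1)$ small, and iterate on intervals $I_k=[t_0-2r_k,\,t_0-r_k]$, using $|I_k\cap J|/|I_k|\to 1$ to absorb the loss coming from the complement of $J$, in the spirit of Apraiz--Escauriaza--Wang--Zhang. Summing the resulting geometric recurrence produces \eqref{obsbis}.

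For the controllability part I would introduce the observation operator
\begin{equation*}
\mathcal{O}:L^2(M)\ni\varphi_T\longmapsto\bigl((t,x)\mapsto(e^{(T-t)\Delta}\varphi_T)(x)\bigr)\in L^1\bigl(J;C(E)\bigr),
\end{equation*}
continuity in $x$ on the closed set $E\subset M$ following from interior parabolic regularity away from $t=T$ (the thin strip near $t=T$ being handled by trimming $J$). Since $E$ is closed and $M$ compact, $J\times E$ is a compact metrisable space, so Riesz's theorem identifies $C(J\times E)^*$ with the space of finite Borel measures on $J\times E$. Minimising over $\varphi_T\in L^2(M)$ the Lions--HUM functional
\begin{equation*}
F(\varphi_T)=\tfrac12\Bigl(\int_J\sup_{x\in E}|\mathcal{O}\varphi_T(t,x)|\,dt\Bigr)^2-\langle e^{T\Delta}(v_0-u_0),\varphi_T\rangle_{L^2(M)},
\end{equation*}
which is strictly convex and coercive thanks to \eqref{obsbis}, yields a minimiser $\varphi_T^{\ast}$; its Euler--Lagrange equation, read through the above duality, produces a finite Borel measure $\mu$ supported in $J\times E$ such that the associated source drives the equation from $u_0$ to $e^{T\Delta}v_0$ in time $T$, i.e.\ \eqref{eq-mesure thm} holds.

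The main obstacle I expect is the observability step: with $E$ of vanishing Lebesgue measure one pays a full factor $e^{D\Lambda}$ in the spectral inequality, and this loss must be defeated inside the iteration by a careful balancing of the parabolic smoothing $e^{-c\Lambda_k^2 r_k}$ with the Lebesgue-density thinning of $J$, so that the geometric recurrence still converges. A secondary, more conceptual point is the verification, in the HUM step, that the minimiser actually lies in $L^2(M)$ (as opposed to some weaker completion) so that the resulting $\mu$ is a genuine finite Borel measure on the compact set $J\times E$, rather than a weaker distributional object.
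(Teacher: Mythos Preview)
Your observability argument follows the same Lebeau--Robbiano/Miller/Phung--Wang scheme as the paper, with only cosmetic differences in organisation: the paper first isolates an interpolation inequality $\|e^{t\Delta}f\|_{L^2}\le Ne^{N/(t-s)}\|e^{t\Delta}f\|_{L^\infty(E)}^{1-\epsilon}\|e^{s\Delta}f\|_{L^2}^\epsilon$ (optimising over $\Lambda$ \emph{before} telescoping, via Young's inequality), whereas you keep $\Lambda$ free and optimise inside the recurrence. Both routes invoke the Phung--Wang density-point lemma to manufacture a geometric time grid inside $J$, and the resulting telescopic sum is the same.

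For the controllability part your route diverges from the paper's and contains genuine gaps. The paper does \emph{not} minimise a HUM functional: it observes that $w\mapsto (w|_{t=0},u_0-v_0)_{L^2}$ is a bounded linear form on the range $X$ of the observation operator (sitting inside $L^1((0,T);C^0(E))$), extends it by Hahn--Banach, and then invokes the duality $\bigl(L^1((0,T);C^0(E))\bigr)^*=L^\infty((0,T);\mathcal M(E))$ to produce $\mu$. This sidesteps precisely the difficulties you raise. In your variational approach, $F$ is \emph{not} strictly convex (the $L^1_tL^\infty_x$ norm is not strictly convex, so neither is its square composed with a linear map), and it is \emph{not} coercive on $L^2(M)$: \eqref{obsbis} only controls $\|e^{T\Delta}\varphi_T\|_{L^2}$, not $\|\varphi_T\|_{L^2}$, and compactness of the heat semigroup precludes any uniform lower bound. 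A minimising sequence therefore need not be bounded in $L^2(M)$, so your ``secondary'' concern is in fact the central obstruction, and the Euler--Lagrange step (which would in any case be a subdifferential inclusion, the norm being non-differentiable) does not get off the ground. Finally, $J\times E$ is not compact ($J$ is merely measurable in $(0,T)$), so the identification $C(J\times E)^*=\mathcal M(J\times E)$ is unavailable as stated; the paper works instead with the Bochner space $L^1((0,T);C^0(E))$ and its dual, which requires no compactness in time. Your Fenchel--Rockafellar instinct, mentioned in the opening line but then abandoned, is much closer to the paper's Hahn--Banach argument and would repair all of this---at which point the minimisation and Euler--Lagrange steps become superfluous.
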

We refer to Section~\ref{sec.5} (see \eqref{eq-mesure}) for the precise meaning of \eqref{eq-mesure thm}.
Actually, we can even go a step further and show that the $d+1$ dimensional heat equation can be steered to zero by using measure-valued controls supported on a set of space-time Hausdorff measure $d - \delta$. 
\begin{thm}[Observability and exact controllability using controls localised at fixed times]\label{control-hausbis} 
Take $\delta\in (0,1)$ as in Theorem~\ref{spectral}. Let  $m>0$, $\tau \in (0,1)$ and $D>0$. There exists  $C>0$,  such that if  $E_1\subset M$ satisfies $|E_1| \geq m$ or $E_2 \subset M$ satisfies $\mathcal{C}_{\mathcal{H}}^{d- \delta} (E_2)\geq m$. 
then for any sequence $(s_n)_{n\in \mathbb{N}}$,
$$0<\cdots <  s_n<  \cdots  < s_0< T $$ converging not too fast to $0$, i.e.,
$$  \exists \tau \in (0,1); \forall n \in \N,  (s_{n} - s_{n+1}) \geq \tau ( s_{n-1} - s_{n}), $$
we have that  for any $u_0 \in L^2(M)$, the solution 
$u = e^{t \Delta} u_0$ to the heat equation 
$$\partial _t u - \Delta u =0,     u \mid_{\partial M} =0 \text{ (Dirichlet condition) or } \partial_\nu u \mid_{\partial M} =0 \text{ (Neumann condition)},
$$ satisfies 
\begin{equation}\label{obster} 
\| e^{T \Delta } u_0 \|_{L^2(M)} \leq   C \sup_{n \in \N} e^{-\frac D{s_n- s_{n+1}} }\int_{E_1} |e^{{s_n\Delta} } u_0| (s_n,x) |dx \end{equation}
and \begin{equation}\label{obsterbis} 
\| e^{T \Delta } u_0 \|_{L^2(M)} \leq  C \sup_{n \in \N, x\in E_2} e^{-\frac D{s_n- s_{n+1}} }| e^{s_n\Delta}  u_0 |(s_n, x).
\end{equation}
As a consequence, under the additional assumption that $E_1$ is a closed subset of $M$, given any sequence $(t_n)_{n\in \mathbb{N}}$,
$$0< t_0 < \cdots< t_n < \cdots< T $$ converging not too fast to $T$,
\begin{equation}\label{nottoofast} \exists 0<\tau<1; \forall n \in \N,  (t_{n+1} - t_{n} )\geq \tau ( t_{n} - t_{n-1}),
\end{equation} for all $u_0, v_0 \in L^2(M)$ 
there exists $(f_j) $ a sequence of functions on  $E_1$ such that 
$$ \sum_j e^{\frac D{(t_{j+1}-t_j)} }\|f_j\|_{L^\infty(E_1)}  < +\infty, $$ and the solution to 
\begin{equation}\label{solmesurebis}
\begin{gathered}
(\partial_t - \Delta ) u = \sum_{j=1}^{+\infty} \delta_{t=t_j}\otimes f_j (x)1_{E_1},  \quad u \mid_{t=0} = u_0,\\
u \mid_{\partial M} =0 \text{ (Dirichlet condition) or } \partial_\nu u \mid_{\partial M} =0 \text{ (Neumann condition)},
\end{gathered}
\end{equation}
satisfies 
$$ u \mid_{t> T} =e^{t\Delta} v_0.$$

Similarly  under the additional assumption that $E_2$ is a closed subset of $M$, given any sequence $(t_n)_{n\in \mathbb{N}}$,
$$J= \{ 0\leq t_0 < \cdots< t_n < \cdots< T\} $$ converging not too fast to $T$ as in~\eqref{nottoofast},
 for all $u_0, v_0 \in L^2(M)$,
there exists $(\mu_j) $ a sequence of Borel measure supported on $E_2$ such that 
$$ \sum_j e^{\frac D{(t_{j+1}-t_j)} }|\mu_j| (E_2) < +\infty, $$ and the solution to 
\begin{equation}\label{solmesure}
\begin{gathered}
(\partial_t - \Delta ) u = \sum_{j=1}^{+\infty} \delta_{t=t_j} \otimes \mu_j (x)1_{E_2},  \quad u \mid_{t=0} = u_0,\\
u \mid_{\partial M} =0 \text{ (Dirichlet condition) or } \partial_\nu u \mid_{\partial M} =0 \text{ (Neumann condition)},
\end{gathered}
\end{equation}
satisfies 
$$ u \mid_{t> T} =e^{t\Delta} v_0.$$
\end{thm}
The meaning of solving~\eqref{solmesurebis},~\eqref{solmesure} is also explained in Section~\ref{sec.5}. 
\begin{rem}
We have $t_{j+1} - t_j\leq T-t_j$. As a consequence we get that 
$$\|f_j\|_{L^\infty(E_1)} \leq C e^{-\frac{D} {T-t_j}}, \qquad |\mu_j|(E_1) \leq C e^{-\frac{D} {T-t_j}}$$
which means that our controls are exponentially small when $j\rightarrow + \infty$ ($t\rightarrow T$).
\end{rem}
The plan of the paper is as follows. In Section~\ref{sec.2} we show how, for manifolds without boundaries, the estimates for spectral projectors, (Theorem~\ref{spectral}) follows quite easily from Logunov-Malinnikova's results~\cite{Lo18, Lo18-1, LoMa17} combined with Jerison-Lebeau's method~\cite{JeLe99}. Then in Section~\ref{sec.3}, we show how to extend the results to the case of manifolds with boundaries. When the manifold is smooth, this is quite standard as we can extend it by reflexion around the boundary using geodesic coordinate. This allows to define a new $W^{2, \infty}$ manifold without boundary (the double manifold), which is topologically two copies of the original manifold glued at the boundary, and into which these two copies embed isometrically. At our low regularity level, the use of geodesic coordinate systems is prohibited  and a careful work is required to perform this extension. We actually provide with the natural alternative for geodesic systems (see Proposition~\ref{geodesic}). We believe that this construction of the double manifold at this low regularity level has an interest of its own.  In Section~\ref{sec.4} we prove the propagation of smallness and observation estimates for solutions to heat equation (estimates~\eqref{obs}, \eqref{obsbis} and~\eqref{obster} in Theorems~\ref{control}, \ref{control-haus} and~\ref{control-hausbis}), by adapting a proof in Apraiz et al.~\cite{AEWZ14}, which in turn relied on a mixing of ideas from Miller~\cite{mi10} and Phung-Wang~\cite{PhWa13}, following the pionneering work by Lebeau-Robbiano~\cite{LeRo95-1}. Finally, in Section~\ref{sec.5} we prove the exact controllability results by adapting quite classical duality methods to our setting. Here we also improve on previous results by allowing control supported on a sequence of times (hence measure zero set in time).
\subsection*{Acknowledgements}
	N.B.  is supported by Institut Universitaire de France and ANR  grant ISDEEC, ANR-16-CE40-0013. Part of this work was done while N.B. was in residence during the fall 2019 at the Mathematical Science Research Institute (MSRI)
with support by the NSF grant DMS-1440140. I. M. was partially supported by the European Research Council (ERC) MAFRAN grant under the European's Union Horizon 2020 research and innovation programme (grant agreement No 726386) while he was a research associate in PDEs at DPMMS, University of Cambridge,~U.K.
We would like to thank a referee for pointing to us a mistake in the previous proof of the construction of the double manifold (Section~\ref{sec.3}).
\tableofcontents

\section{Proof of the spectral inequalities for compact manifolds}\label{sec.2}
In this section we give a proof of Theorem~\ref{spectral} in the case of a manifold without boundary. We first show in Section \ref{sec:raf} that the estimate~\eqref{BORNE} is actually a straightforward consequence of the results obtained by Logunov and Malinnikova~\cite{LoMa17}. In Section \ref{sec.2.1} we combine \cite{LoMa17} with the spectral estimates on open sets obtained by Jerison and Lebeau (cf. \cite{JeLe99}) to get (\ref{BORNERAF}) when $\partial M = \emptyset$. \par 

We deal with the case $\partial M \not= \emptyset$ in Section \ref{sec.3}.

\subsection{The spectral inequality for very small sets implies the spectral inequality for non zero measure sets}
\label{sec:raf}

Here we prove that ~\eqref{BORNERAF} implies~\eqref{BORNE}. 
Assume that $|E_1|>m$ is given. 

 and consider 
$$\phi= \sum_{\lambda_k \leq \Lambda} u_k e_k (x), \qquad \textrm{with} \qquad \| \phi \|_{L^2(M)} =1.$$ 
Let $F\subset M$ with $ |F| > \frac m 2$. According to~\eqref{equivalence} (and the fact that the measure and $\mathcal{C}^d$ are equivalent), we have 
$$ \mathcal{C}_{\mathcal{H}}^d(F)> c_d \frac m 2 \Rightarrow \mathcal{C}_\mathcal{H}^{d- \delta } (F) \geq \min(1, c_d \frac m 2 ). $$
Now, according to~\eqref{BORNERAF}, we have for any such $F\subset M, |F| \geq \frac m 2$, 
\begin{equation}
\label{estim}\| \phi\|_{L^\infty(M)}  \leq C e^{D \Lambda} \sup_F |\phi (x)|. 
\end{equation}
with constants uniform as long as $|F| \geq \frac m 2$.
Let 
\begin{equation*}
F = \Bigl\{ x\in {{E_1}}; \quad |\phi(x)| \leq \frac{1} {2 C} e^{- D \Lambda} \| \phi\|_{L^\infty(M)}\Bigr\}.
\end{equation*} 
If $|F|\geq\frac m 2$, we have  
\begin{equation*}
\| \phi\|_{L^\infty(M)} \leq  C e^{D \Lambda} \sup_F |\phi (x)| \leq \frac {\| \phi\|_{L^\infty(M)}} 2,
\end{equation*} which shows that $F$ cannot satisfy~\eqref{estim} (because $\| \phi\|_{L^2(M)} = 1 \Rightarrow \phi \not \equiv 0$). Hence, $|F| <\frac m 2$ and consequently, 
$$ \int_{{E_1}} |\phi(x)| dx  \geq \int_{{E_1\setminus F}} |\phi(x)| dx  \geq \frac{ |{{E_1}}|  } { (4C) } e^{-D \Lambda } \| \phi\|_{L^\infty(M)}
$$ which implies ~\eqref{BORNE}.

\subsection{Proof of the precised estimate for compact manifolds}
\label{sec.2.1}

Let $m> 0$ and let $E_2 \subset M$ be a given set with $\mathcal{C}_{\mathcal{H}}^{d- \delta} (E_2) >m$. Our goal is to obtain estimate \eqref{BORNERAF} in this case. \par

We first localize the estimate on a coordinate patch. Since $M$ is compact, there exists a finite covering 
$$ M \subset \bigcup_{j=1}^N U_j ,$$ 
and $W^{2,\infty}$ diffeomorphisms  $\psi_j: V_j \rightarrow \mathbb{R}^d$, with $V_j$ a neighborhood of $U_j$ in $M$. From~\cite[Theorem 14.6]{JeLe99} there exists $C,D>0$ depending only on $M$ such that for any $j\in \{1, \dots, N\}$, 
\begin{equation}\label{jerison-lebeau}
 \| \phi \|_{L^2(M)}\leq C e^{D\Lambda} \| \phi\|_{L^2(U_j)}.
 \end{equation}

Let $j_0$ such that 
$$\mathcal{C}_{\mathcal{H}}^{d- \delta} (E_2 \cap U_{j_0}) > \frac{m}{N}.$$
We now work in the coordinate patch, $U_{j_0}$ and define the sets 
\begin{equation*}
V= \psi_{j_0}(V_{j_0}), \qquad U= \psi_{j_0}(U_{j_0}), \qquad F = \psi_{j_0}(E_2 \cap U_{j_0}).
\end{equation*} Observe that, as $\psi_{j_0}$ is a diffeomorphism of class $W^{2,\infty}$ by hypothesis, we must have
\begin{equation}
\mathcal{C}_{\mathcal{H}}^{d - \delta} (F) > C(\psi_0) \frac{m}{N}.
\label{eq:content Hausdorff preserve par diffeo}
\end{equation} Now,  denote by $f_k$ and $\varphi$ the images of $e_k$ and $\phi$ by the push forward $(\psi_{j_0})_*$, which are defined on $V$. Consider the functions
\begin{equation*}
 u(t,x) = \sum_{\lambda_k \leq \Lambda} u_k \frac{ \sinh(\lambda_k t )  }{\lambda_k} f_k(x), \qquad \varphi(x) := \sum_{\lambda_k \leq \Lambda} u_k f_k(x) = \partial_t u \mid_{t=0},
\end{equation*} for $(u_k)_{k}$ given. 
Here by convention we set, for $\lambda=0$, 
${\frac{ \sinh(\lambda t )  }{\lambda}}= t $.
 We have  
$$ \Bigl( \frac 1 {\kappa} \text{div} g^{-1} \kappa \nabla_x + \partial_t^2\Bigr) u =0 \Leftrightarrow  \Bigl(\text{div} g^{-1} \kappa \nabla_x + \partial_t\kappa \partial_t \Bigr) u=0. $$ 
 Consider, for $T_2 > T_1 > 0$ the sets 
\begin{equation*}
\mathcal{K}  := [-T_1, T_1] \times \overline{U}, \qquad \Omega  := (-T_2, T_2) \times V,   \qquad  E= \{0\} \times F
\end{equation*} which by construction satisfy the inclusions $E \subset \mathcal{K}\subset \Omega$. Next, thanks to \eqref{eq:content Hausdorff preserve par diffeo}, we can write
\begin{equation}\label{2.31}
 \mathcal{C}_{\mathcal{H}}^{n-1- \delta} (E) > m', \qquad \textrm{for} \quad n= d+1, \quad m' = C(\psi_0) \frac{m}{N}.
 \end{equation}
For sufficiently small $\delta >0$ we can now apply~\cite[Theorem 5.1]{LoMa17} and get {{from~\eqref{2.31}}}
\begin{equation}\label{log-mal} 
\sup_{\mathcal{K}} |\nabla_{t,x} u| \leq C \bigl( \sup_E |\nabla_{t,x} u| \bigr)^{\alpha} \bigl( \sup_{\Omega} |\nabla_{t,x} u|\Bigr)^{1- \alpha}.
\end{equation}
We now need  a variant of  Sobolev embeddings, which we prove for the reader's convenience: 
\begin{prop}\label{sobolev}There exists $\sigma>0$ such that with 
$$ \mathcal{H}^\sigma= D( (- \Delta)^{\sigma/2})$$ endowed with its natural norm 
$$\| u\|_{\mathcal{H}^\sigma}= \Bigl(\sum_{k} |u_k |^2 (1+ \lambda_k)^{2\sigma} \Bigr)^{1/2} ,
$$
we have
$$\| \nabla_x u \|_{L^\infty}+  \| u \|_{L^\infty} \leq C \| u \|_{\mathcal{H}^\sigma}.$$
\end{prop}
{
\begin{rem}
For smooth metrics and compact manifolds (without boundary), the space $\mathcal{H}^\sigma$ coincide with the usual Sobolev spaces $H^\sigma$, and Lemma~\ref{sobolev} is just the usual Sobolev injection. At our level of regularity it is no more case, as the spaces $H^\sigma$ and $\mathcal{H}^\sigma$ coincide for $0\leq \sigma \leq 2$ but no further.
\end{rem}}
\begin{proof}
We start with a Lemma about eigenfunctions of the Laplace operator
\begin{lem}\label{fonctionspropres} For all $\sigma_1 > \frac d 2$ there exists $C>0$ such that for all eigenfunctions of the Laplace operator we have 
$$ \| \nabla_x e_k \|_{L^\infty} \leq C (1+ \lambda_k ) ^{1+ \sigma_1}  \| e_k\|_{L^2}, \qquad \|  e_k \|_{L^\infty} \leq C ( \lambda_k ) ^{1+ \sigma_1}  \| e_k\|_{L^2}.$$ 
\end{lem}
\begin{proof}[Proof of Lemma~\ref{fonctionspropres}]
We start with the bound for $\| e_k\|_{L^\infty}$.  For $n \leq 3$, it follows from elliptic regularity 
$$ \| e_k \|_{H^2} \leq C ( \| \Delta e_k\|_{L^2}+ \| e_k \|_{L^2}) \leq C ( 1 + \lambda^2),$$ which implies for $0 \leq s \leq 2$,
$$ \| e_k\|_{H^s} \leq C ( 1 + \lambda^2)^{s/2}, $$
and Lemma~\ref {fonctionspropres} follows from Sobolev embeddings.
For higher dimensions, we shall use the following results from~\cite{HaLi97} about weak solutions to 
 \begin{equation}\label{weak}
  -\sum_{i,j} \partial_{y_i} a_{i,j} \partial_{y_j} w + c w =f.\end{equation}
  with 
  $$ \lambda |\xi|^2 \leq \sum_{i,j =1}^n a_{i,j}(x) \xi_i \xi_j \leq \Lambda |\xi|^2$$
   
 \begin{thm}[\protect{\cite[Theorem 3.8, combined with Corollary 3.2]{HaLi97} }]Consider $w \in H^1( B(0,1)$ a weak solution to~\eqref{weak}. Assume that $a_{i,j}\in C^0( \overline{B(0,1)}$, $c\in L^n(B(0,1))$,  $f \in L^q(B(0,1))$, for some $q \in (\frac n 2 , n)$. Then $w \in C^\alpha (B(0,1))$, $\alpha = 2 - \frac n q \in (0,1)$. Moreover  there exists $M = M( \lambda, \Lambda, \| c\| _{L^n}, \tau)>0$ such that  we have the estimate
  \begin{equation}
 \|w\|_{C^{0,\alpha}(B(0, \frac 1 2))} = \sup_{B(0, \frac 1 2)} |w| +  \sup_{x,y \in B(0, \frac 1 2, \, x \not = y} \frac{|w(x) - w(y)|}{ |x - y|^{\alpha} }     \leq M (\| f\|_{L^q(B(0,1))}  +  \|w \|_{H^1(B(0,1))}), 
  \end{equation}  
 where $\tau$ is the uniform continuity modulus of the functions $a_{i,j}$, 
 $$ \forall y, y' \in B(0,1),  |a_{i,j} (y) - a_{i,j}(y')| \leq \tau (|y-y'|).$$
  \end{thm}
  Using a partition of unity and applying this lemma in charts with $c= - \lambda_k^2 \kappa (x)$, $f=0$, we get that $e_k \in L^\infty$ (with an implicit bound in terms of $\lambda_k$). Then applying again the result with $c=0$, $f(x)= \lambda_k^2 \kappa (x) e_k (x)$, we get (choosing $q = \frac n 2 +0$, i.e. arbitrarily close to $\frac n 2$)
  $$ \| e_k\| _{L^\infty} \leq M ( \lambda_k^2 \| e_k\|_{L^q} + \| e_k \|_{H^1} ) \leq M (  \lambda^2 \| e_k\|_{L^\infty}^{1- \theta} \| e_k \| _{L^2}^\theta + \lambda_k \| e_k \|_{L^2},$$
  with $ \theta = \frac 2 q = \frac 4 n - 0$,
  and consequently, we get 
  $$ \| e_k\|_{L^\infty} \leq M' (\lambda_k ^{\frac 2 \theta } \| e_k \|_{L^2}+ \lambda_k \| e_k \|_{L^2}) \leq C (1+\lambda_k) ^{\frac n 2 + 0} \| e_k \|_{L^2}.$$
  Now, we turn to the estimates for $\|\nabla_x e_k \|_{L^\infty}$. We shall use in this case 
 \begin{thm}[\protect{\cite[Theorem 3.13, combined with Theorem 1.3]{HaLi97} }]Consider $w \in H^1( B(0,1))$ a weak solution to~\eqref{weak}. Assume that $a_{i,j}\in C^\alpha( \overline{B(0,1))}$,  $f \in L^q(B(0,1))$, for some $q >n$. Then $\nabla_y w \in C^\alpha (B(0,1))$, $\alpha = 1 - \frac n q \in (0,1)$. Moreover  there exists $M = M( n, \lambda, \| a_{i,j} \|_{C^\alpha})>0$ such that  we have the estimate
  \begin{multline}
 \|\nabla_y w\|_{C^{0,\alpha}(B(0, \frac 1 2))} = \sup_{B(0, \frac 1 2)} |\nabla_y w| +  \sup_{\smash{y, y' \in B(0, \frac 1 2), \, y' \not = y}} \frac{|\nabla_y w(y) - \nabla_y w(y')|}{ |y - y'|^{\alpha} }   \\
   \leq M (\| f\|_{L^q(B(0,1))}  +  \|w \|_{H^1(B(0,1))}).
 \label{eq:gradient estimate Holder}
 \end{multline}  
   \end{thm} 
    Using again a partition of unity and applying now this lemma in charts with $c=0$, $f(x)= \lambda_k^2 \kappa (x) e_k (x)$, we get (choosing $q = n+0$, i.e. arbitrarily close to $n$)
  $$ \| \nabla_xe_k\| _{L^\infty} \leq M ( \lambda_k^2 \| e_k\|_{L^q} + \| e_k \|_{H^1} ) \leq M (  \lambda^2 \| e_k\|_{L^\infty}^{1- \theta} \| e_k \| _{L^2}^\theta + \lambda_k \| e_k \|_{L^2},$$
  with $ \theta = \frac 2 q = \frac 2 n - 0$,
  and consequently, we get 
  $$ \| e_k\|_{L^\infty} \leq M (\lambda_k ^{2+( \frac n 2 + 0) (1- \theta)  } \| e_k \|_{L^2}+ \lambda_k \| e_k \|_{L^2}) \leq C ( 1+ \lambda_k )^{\frac n 2 +1 + 0} \| e_k \|_{L^2}.$$
\end{proof}
Let us now come back to the proof of Proposition~\ref{sobolev}. From Weyl formula 
$$\lambda_k \sim k^{\frac 1 d}. $$
As a consequence, we have 
\begin{multline}
\| \sum_k u_k  e_k \|_{L^\infty} \leq C \sum_k |u_k| (1+\lambda_k)^{\sigma_1} \\
\leq C \Bigl( \sum_k |u_k |^2 (1+ \lambda_k )^{2\sigma_1+2p}\Bigr)^{1/2} \Bigl( \sum_k (1+ \lambda_k)^ {-2p} \Bigr)^{1/2}\leq C \| u\|_{\mathcal{H}^{\sigma_1+ p}},
\end{multline} as soon as $2p>d$.
\end{proof}
{ We now come back to the proof of~\eqref{BORNERAF}.
Using Sobolev embedding, we observe 
\begin{equation*}
\sup_{\Omega} |\nabla_{t,x} u | \leq C \| \nabla_{t,x} u(t)\|_{\mathcal{H}^{\sigma }}   \leq Ce^{(T_2+1) \Lambda} \|\phi \|_{L^2(M)}. 
\end{equation*} }By definition of $u$, we have 
\begin{equation*}
\nabla_x u \mid_{E}  = 0, \qquad u\mid_{E}  = 0, \qquad  \partial_t u\mid_{E} = \varphi 1_F.
\end{equation*} 
We deduce from~\eqref{jerison-lebeau} and~\eqref{log-mal} 
\begin{multline}
\| \phi \|_{L^2(M)}\leq C e^{D\Lambda} \| \phi\|_{L^2(U_j)}\leq C' e^{D\Lambda}\sup_{U_j} | \phi(x)| \\
\leq  C' e^{D\Lambda} \sup_{\Omega} |\nabla_{t,x} u| \leq C'' e^{D\Lambda} \bigl( \sup_E |\nabla_{t,x} u| \bigr)^{\alpha} \bigl( \sup_{\mathcal{K}} |\nabla_{t,x} u|\Bigr)^{1- \alpha}\\
\leq C'' e^{D'\Lambda} \bigl( \sup_F |\phi| \bigr)^{\alpha} \bigl( \| \phi \|_{L^2}\bigr)^{1- \alpha},
\end{multline}
which implies
$$\| \phi \|_{L^2(M)}^\alpha\leq  C'' e^{D'\Lambda} \bigl( \sup_F |\phi| \bigr)^{\alpha} $$
Another use of Sobolev embeddings allows to conclude the proof {{\eqref{BORNERAF}}}.

\section{The double manifold}\label{sec.3}
In this section we give the proof of Theorem~\ref{spectral} for a manifold with boundary $M$ and Dirichlet or Neumann boundary conditions on $\partial M$. The classical idea is to reduce this question to the case of a manifold without boundary by gluying two copies of $M$ along the boundary in such a way the new double manifold $\widetilde{M}$ inherits a Lipschitz metric, which allows to apply the previous results (without boundary) to this double manifold. However, this procedure of gluying has to be done properly, as otherwise the resulting {{glued}} metric might not even be continuous. The main difficulty in our context comes from the fact that the usual method for this {\em doubling} procedure relies on the use of a reflexion principle in geodesic coordinate systems. However, the existence of such coordinate systems requires at least $C^2$ (resp. $C^3$) regularity for the metric (resp. the domain), to be compared with our $W^{1,\infty}$ and $W^{2, \infty}$ assumptions, to get a $C^1$ (hence integrable) geodesic flow. To circumvent this technical difficulty, we shall define a \textit{pseudo-geodesic system} relying on a regularisation of the normal direction to the boundary, which will be $W^{2, \infty}$ and tangent at the boundary to the ``geodesic coordinate system" (which actually does not exist at this low regularity level).

Let $\widetilde{M} = \overline{M} \times \{-1,1\}/\partial M$  the double space made of two copies of $\overline{M}$ where we identified the points on the boundary, $(x,-1)$ and $(x,1)$, $x\in \partial M$. 
\begin{thm}[The double manifold]\label{double}
Let $g$ be given. There exists a $W^{2, \infty}$ structure on the double manifold $\widetilde{M}$, a metric $\widetilde{g}$  of class $W^{1, \infty}$ on $\widetilde{M}$, and  a density $\widetilde{\kappa}$ of class $W^{1, \infty}$ on $\widetilde{M}$ such that the following holds.
\begin{itemize}
\item The maps
$$ i^\pm :  x\in M  \rightarrow (x, \pm 1) \in \widetilde{M} = M \times \{\pm 1\} / \partial M$$ are isometric embeddings.
\item The density induced on each copy of $M$ is the density $\kappa$,
$$\widetilde{\kappa} \mid_{M\times\{\pm1 \}} = \kappa.
$$
\item For any eigenfunction $e$ with eigenvalue $\lambda^2$ of the Laplace operator $-\Delta = - \frac 1 {\kappa} \text{div } g^{-1} \kappa \nabla$ with Dirichlet or Neumann boundary conditions, there exists an eigenfunction $\widetilde{e}$ with the same eigenvalue $\lambda$ of the Laplace operator $-\Delta = - \frac 1 {\widetilde{\kappa} }\text{div } \widetilde{g}^{-1} \widetilde{\kappa} \nabla$ on $\widetilde{M}$ such that 
\begin{equation}\label{ext}
\widetilde{e} \mid_{M \times \{1\}} = e, \quad \widetilde{e} \mid_{M \times \{-1\}} = \begin{cases}  -e \quad &(\text{Dirichlet boundary conditions}),\\
 e \qquad &(\text{Neumann boundary conditions}). \end{cases}
\end{equation}
\end{itemize}
\end{thm}
\begin{cor}
Estimate~\eqref{BORNERAF} for manifolds without boundaries implies~\eqref{BORNE} for Dirichlet or Neumann boundary conditions, and in the case of Dirichlet boundary conditions, we could even add any constant to the spectral projector and replace $\phi$ by 
$$ \Psi = u_0 + \sum_{\lambda_k \leq \Lambda} u_k e_k (x).$$
\end{cor}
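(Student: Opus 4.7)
The strategy is to lift each spectral sum on $M$ to the double manifold $\widetilde{M}$ provided by Theorem~\ref{double}, invoke the boundaryless version of Theorem~\ref{spectral} already established in Section~\ref{sec.2.1}, and pull the resulting estimate back through the isometric embedding $i^+$.

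Given $\phi = \sum_{\lambda_k \leq \Lambda} u_k e_k$ with Dirichlet or Neumann boundary conditions, I would set
\[
\widetilde{\phi} = \sum_{\lambda_k \leq \Lambda} u_k \widetilde{e_k}
\]
on $\widetilde{M}$, where $\widetilde{e_k}$ is the extension furnished by the third bullet of Theorem~\ref{double}. Since $\widetilde{M}$ is a $W^{2,\infty}$ manifold without boundary carrying a $W^{1,\infty}$ metric $\widetilde{g}$ and density $\widetilde{\kappa}$, the hypotheses for the boundaryless case are fulfilled, and $\widetilde{\phi}$ is a genuine spectral projection at level $\Lambda$ of the associated Laplacian. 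To apply the estimate I transfer the observation set $E_2 \subset M$ to $\widetilde{E_2} := i^+(E_2) \subset \widetilde{M}$; reading the condition $\mathcal{C}^{d-\delta}_{\mathcal{H}}(E_2) > m$ in a local chart and its image under $i^+$, Proposition~\ref{invar} applied to the (locally) $W^{2,\infty}$ diffeomorphism $i^+$ yields $\mathcal{C}^{d-\delta}_{\mathcal{H}}(\widetilde{E_2}) > c_0 m$ for some $c_0 > 0$ depending only on $M$. The boundaryless Theorem~\ref{spectral} then gives
\[
\|\widetilde{\phi}\|_{L^\infty(\widetilde{M})} \leq C e^{D\Lambda} \sup_{y \in \widetilde{E_2}} |\widetilde{\phi}(y)|.
\]
Because $i^\pm$ are isometric embeddings and $\widetilde{\phi}\circ i^+ = \phi$, $\widetilde{\phi}\circ i^- = \pm\phi$, the left-hand side dominates $\|\phi\|_{L^\infty(M)}$ while the right-hand side equals $\sup_{E_2}|\phi|$. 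This gives \eqref{BORNERAF} on $M$, and the argument of Section~\ref{sec:raf} then yields \eqref{BORNE} as well.

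For the Dirichlet case with an added constant $u_0$, I would extend $\Psi$ across $\partial M$ by the even–odd decomposition
\[
\widetilde{\Psi}(x,1) = \Psi(x), \qquad \widetilde{\Psi}(x,-1) = 2 u_0 - \Psi(x).
\]
Since $e_k\mid_{\partial M}=0$, one has $\Psi\mid_{\partial M}=u_0$, so $\widetilde{\Psi}$ is well defined (continuous across the glued boundary), and one checks that
\[
\widetilde{\Psi} = u_0\cdot \mathbf{1} + \sum_{\lambda_k\leq \Lambda} u_k \widetilde{e_k},
\]
where $\mathbf{1}$ is the constant eigenfunction with eigenvalue $0$ on $\widetilde{M}$. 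Hence $\widetilde{\Psi}$ is again a spectral sum at level $\Lambda$, and the three-step argument above goes through verbatim, noting that $\|\widetilde{\Psi}\|_{L^\infty(\widetilde{M})} \geq \|\Psi\|_{L^\infty(M)}$ and $\sup_{\widetilde{E_2}}|\widetilde{\Psi}| = \sup_{E_2}|\Psi|$.

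The only real point to be careful about is that the doubling construction is genuinely spectral, i.e.\ that truncating at level $\Lambda$ on $M$ corresponds exactly to truncating at level $\Lambda$ on $\widetilde{M}$; this is precisely what the third bullet of Theorem~\ref{double} asserts. Everything else is a bookkeeping exercise involving Proposition~\ref{invar} to preserve the Hausdorff content across the embedding, together with the already proven boundaryless estimate.
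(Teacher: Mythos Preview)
Your proposal is correct and is precisely the argument the paper leaves implicit: lift the spectral sum to the double manifold via Theorem~\ref{double}, apply the boundaryless estimate~\eqref{BORNERAF} from Section~\ref{sec.2.1}, and pull back along $i^+$; the Dirichlet-plus-constant case works because the constant function is a genuine eigenfunction (eigenvalue $0$) on the closed manifold $\widetilde{M}$, even though it lies outside the Dirichlet domain on $M$. The paper states this Corollary without proof, and your write-up fills in exactly the details one would expect.
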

\begin{rem}\label{rema.eigen}
Since the vector spaces generated respectively by the  Dirichlet or Neumann eigenfunctions are dense in $L^2(M)$, the vector space generated by their extensions as defined in~\eqref{ext} is dense in $L^2( \widetilde{M})$. We deduce that there exists a Hilbert basis of $L^2(\widetilde{M})$ made of eigenfunctions of $\widetilde{\Delta}$ on $\widetilde{M}$ which are the extensions of the Dirichlet and the Neuman eigenfunctions  of $\Delta$ on $M$.
\end{rem}
To prove Theorem~\ref{double}, we are going to endow $\widetilde{M}$ with a $W^{2, \infty}$ manifold structure and a Lipschitz metric $\widetilde{g}$ which coincides with the original metric $g$ on each copy of $M$. For this  we just need to work near the boundary $\partial M$ (as away from $\partial M$, $\widetilde{M}$ coincides with one of the copies $M\times \{\pm 1\}$). 
Consider a point $x_0 \in \partial M$. There exists a covering $\partial M \subset \cup_{j=1}^N U_j $,  (here $\partial M$ is seen as a subset of $\overline{M}$) where $U_j$ are open sets of $\overline{M}$ and $W^{2, \infty}$ diffeomorphisms 
$$ \psi_j: V_j \rightarrow \mathbb{R}^d= \mathbb{R}_y \times \mathbb{R}^{d-1}_x, \qquad \overline{U_j } \subset V_j \subset \overline{M}$$ such that $\psi_j (V_j) = B(0, 1)_{y,x}\cap \{ y \geq 0\}$ and $\psi_j (U_j) \subset [0, \epsilon] \times B(0,\delta)_x$ for some $\delta, \epsilon > 0$ small enough.  Here $W^{2, \infty}$ regularity means $W^{2, \infty}\cap C^1$ regularity of all the change of charts 
$$ \psi_k \circ \psi_j ^{-1}:  \psi_j (U_k \cap U_j) \rightarrow \mathbb{R}^d.$$

Let $a = a(y,x)$ be the metric in this coordinate system, which is hence $W^{1, \infty}$ and defined for 
$$\|x\| \leq \delta ', \quad  y \in [0, \epsilon'], \quad \delta' < \delta <1, \quad \epsilon' < \epsilon.$$
For any $x \in \{ y = 0\}$, consider the vector defined by 
\begin{equation}\label{normale}
 n(x) = \bigl(\lambda(x) \bigr)^{-1/2}  a^{-1} (0,x) \begin{pmatrix} 1 \\0 \end{pmatrix}, \qquad \textrm{for} \quad \lambda (x) = (1,0) \cdot \,  a^{-1} (0,x) \begin{pmatrix}  1 \\0 \end{pmatrix}. \qquad 
 \end{equation} One can check that
 \begin{equation}
x \mapsto   n(x) \in W^{1, \infty}(B(0,\delta'))
\label{eq:regularite de la normale}
 \end{equation}
is the inward normal to the boundary for the metric $a$ at the point $(0,x) \in \psi_j(\partial M \cap V_j)$. Indeed, 
\begin{equation*}
\ ^tn(x) a(0,x) n(x) = \bigl(\lambda(x)\bigr)^{-1} (1,0) a^{-1}(0,x) a(0,x) a^{-1}(0,x) \begin{pmatrix} 1 \\0 \end{pmatrix}=1,
\end{equation*} which makes $n(x)$ unitary and if $X \in \R^{d-1}_x$, then
\begin{equation*}
\ ^t n(x) a(0,x)\begin{pmatrix} 0 \\X \end{pmatrix}= \bigl(\lambda(x) \bigr)^{-1/2}(-1,0)  a^{-1}(0,x) a(0,x)\begin{pmatrix} 0 \\X \end{pmatrix}=0,
\end{equation*} which proves that $n(x)$ is orthogonal to the vectors tangent to the boundary. Finally, since its first component is positive, $n(x)$ points inward. 

We now study the regularity of the quasi-geodesic coordinates. Let $\chi \in C^\infty_0 ( B(0, \delta'))$ be equal to $1$ in $B(0,\delta)$, and 
$$m(s,z) = e^{-\langle sD_z\rangle +1}(\chi n) (0,z), \qquad \textrm{with } \langle z\rangle = (1+ |z|^2)^{1/2}.$$
\begin{lem}\label{bornelinf}
For any $q\in \R$, $j=1, \dots, d-1$, the operators 
$$\langle sD_z\rangle ^{q}e^{-\langle sD_z\rangle +1} \text{ and }sD_{z_j}\langle sD_z\rangle ^{q}e^{-\langle sD_z\rangle +1} $$ are uniformly bounded on $L^\infty (\mathbb{R}^{d-1})$, with respect to the parameter $s\in \mathbb{R}$.

\end{lem}

\begin{proof}
Indeed, these are convolution operators with kernels
$$K_{1,q}(z) = \frac 1 {s^{d-1}} \mathcal{F} \Bigl(\langle \xi \rangle ^{q}e^{-\langle \xi \rangle+1}\Bigr) ( \frac z s),  \qquad K_{2,q}=  \frac 1 {s^{d-1}} \mathcal{F} \Bigl( i\xi_j \langle \xi \rangle ^{q}e^{-\langle \xi \rangle+1}\Bigr) ( \frac z s),$$
where $\mathcal{F}$ stands for the usual Fourier transform on $\R^{d-1}$. Since the functions
\begin{equation}
\xi \mapsto \langle \xi \rangle ^{q}e^{-\langle \xi \rangle}, \qquad \xi \mapsto \xi_j \langle \xi \rangle ^{q}e^{-\langle \xi \rangle}
\end{equation}
are in the Schwartz class, we deduce that  the kernels $K_{1,q}$ and $K_{2,q}$ are uniformly bounded (with respect to $s\geq 0$) in $L^1 (\mathbb{R}^{d-1}_x)$, which implies that the corresponding operators are bounded on $L^\infty(\mathbb{R}^{d-1}_x)$ (uniformly with respect to $s\geq 0$).
\end{proof}

According to (\ref{eq:regularite de la normale}), the map $(s,z) \mapsto m(s,z)$ is Lipschitz and therefore the map 
$z\mapsto m(0,z)$ is also Lipschitz. 
We deduce using Lemma~\ref{bornelinf} and the basic relations 
$$ \frac{d}{ds} \langle sD_z\rangle =  s D_z^2\langle sD_z\rangle^{-1}, \qquad \frac{d}{ds} e^{-\langle sD_z\rangle +1} =  -s D_z^2\langle sD_z\rangle^{-1} e^{-\langle sD_z\rangle +1},$$
that the map
$$ (s,z) \in [- \epsilon' , \epsilon ']\times B(0, \delta') \rightarrow  \phi_j (s,z) = z+s m(s,z)= z + s e^{-\langle sD_z\rangle +1} (\chi n\mid_{s=0}) $$
is  $W^{2, \infty}$. Indeed, since by assumption $(\chi n)\in L^\infty$ and $\nabla_z ( \chi n)\in L^\infty$, a direct calculation gives using Lemma~\ref{bornelinf},
\begin{equation}
\nabla_z \phi_j (s, z) = {1}+ s e^{-\langle sD_z\rangle +1} (\nabla_z (\chi n\mid_{s=0})) 
  \in L^\infty( (- \epsilon,\epsilon) \times B(0,\delta)_z).
\end{equation}
\begin{multline}
\partial_s \phi_j (s,z) =   e^{-\langle sD_z\rangle +1} (\chi n\mid_{s=0}) - s^2 D_z^2 \langle sD_z\rangle^{-1} e^{-\langle sD_z\rangle +1} (\chi n\mid_{s=0})\\
=(1-\langle sD_z\rangle+\langle sD_z\rangle^{-1})e^{-\langle sD_z\rangle +1} (\chi n\mid_{s=0}) \in L^\infty( (- \epsilon,\epsilon) \times B(0,\delta)_z).
\end{multline}
and 
\begin{equation}
 \nabla_z^2 \phi_j (s,z)= s \nabla_z e^{-\langle sD_z\rangle +1} (\nabla_z (\chi n\mid_{s=0})) \in L^\infty( (- \epsilon,\epsilon) \times B(0,\delta)_z).
 \end{equation}
 \begin{multline}
 \partial_s \nabla_z \phi_j (s,z) = (1-\langle sD_z\rangle+\langle sD_z\rangle^{-1})e^{-\langle sD_z\rangle +1} (\nabla_z (\chi n\mid_{s=0}))  \\ 
 \in L^\infty( (- \epsilon ,\epsilon) \times B(0,\delta)_z).
 \end{multline}
 \begin{equation}
 \begin{aligned} \partial_s^2  \phi_j (s,z) 
 &= -  \bigl(1-\langle sD_z\rangle+\langle sD_z\rangle^{-1}\bigr) sD_z^2 \langle sD_z\rangle ^{-1}e^{-\langle sD_z\rangle +1} (\chi n\mid_{s=0})\\ 
& \qquad  - sD_z^2 \bigl( \langle sD_z\rangle ^{-1} + \langle sD_z\rangle ^{-3}\bigr)e^{-\langle sD_z\rangle +1}  (\chi n\mid_{s=0}) \\
 &= sD_z^2 \bigl( 1-  2\langle sD_z\rangle ^{-1}-  \langle sD_z\rangle ^{-2}-\langle sD_z\rangle ^{-3}\bigr)e^{-\langle sD_z\rangle +1}  (\chi n\mid_{s=0}) \\
 &= \sum_{p=1}^{d-1} sD_{z_p} \bigl( 1-  2\langle sD_z\rangle ^{-1}-  \langle sD_z\rangle ^{-2}-\langle sD_z\rangle ^{-3}\bigr)e^{-\langle sD_z\rangle +1}  (D_{z_p}(\chi n\mid_{s=0})) \\
 & \qquad \hbox to 6cm{}   \in L^\infty( (- \epsilon ,\epsilon) \times B(0,\delta)_z).
 \end{aligned}
 \end{equation}
 
  The differential of $\phi_j$ at $s=0$ is
  $$ d_{s,z} \phi_j \mid_{s=0} = \begin{pmatrix} \chi n(0,z)_y & 0 \\ \chi n(0,z)_z &\text{Id} \end{pmatrix},$$ 
  which, according to~\eqref{normale} and the fact that $a$ is definite positive, is invertible for  $z \in B(0, \delta)$. Hence, we deduce that $\phi_j$ is a $W^{2, \infty}$ diffeomorphism from a neighborhood of  $\{0\}_s \times B(0, \delta)_z$ to a neighborhood of 
  $\{0\}_s \times B(0, \delta)_z$.  Notice also that since $\phi_j$ sends the half plane $\{ s>0\}$ to itself, its inverse also sends the half plane $\{ s>0\}$ to itself.
  As a consequence, shrinking $U_j$ into a possibly smaller $U'_j$ we get a covering 
  $$\partial M \subset \cup_{j=1}^N U'_j$$ and $W^{2, \infty}$ diffeomorphisms $\psi'_j = \phi_j^{-1} \circ \psi_j$
  such that after this change of variable, the metric $b(s,z)$ is given for $s\geq 0$ by 
  $$b(s,z) =  ^t \psi_j' a(y,z) \psi_j' .$$
  In particular, for $s=0^+$  we get 
   \begin{equation}\label{metrique-bord}
   b(0^+,z) = \begin{pmatrix}  n(z)_y &  n(z)_z \\ 0 &\text{Id} \end{pmatrix} a(0,z) \begin{pmatrix}  n(z)_y & 0 \\  n(z)_z &\text{Id} \end{pmatrix}.  
   \end{equation}
   Since $n(z) $ is the normal to the boundary we have 
   $$ ^t n(z) a(0,z) n(z) =1, \qquad \begin{pmatrix} 0, Z\end{pmatrix} a(0,z) n(z) =0, \quad  \forall Z \in \mathbb{R}^{d-1}.   $$
  We deduce
   \begin{equation}\label{normal}
     b(0^+,z)   = \begin{pmatrix}  1& 0 \\  0&b'(z) \end{pmatrix},
   \end{equation}
   with $b'(z)$ positive definite. 
   We just proved,
   \begin{prop}\label{geodesic}
Assume that $M$ is a  $W^{2, \infty}$  manifold of dimension $d$ with boundary, endowed with a Lipschitz (definite positive) metric $g$ and a Lipschitz (positive) density $\kappa$. Let
\begin{equation}\label{delta'}
 \Delta = \frac 1 {\kappa} \text{div} g^{-1}(x) \kappa \nabla_x= \text{div}_\kappa \nabla_g. 
 \end{equation} 
 Then near any point of the boundary $X_0 \in \partial M$ there exists  a $W^{2, \infty}$ coordinate system such that in this coordinate system 
 \begin{equation}\label{3.13}
 \begin{gathered}
 X_0 = (0,0) \in \mathbb{R}_y\times \mathbb{R}_z^{d-1}, \quad  \Omega = (0, + \infty) \times \mathbb{R}^{d-1}, \quad  \partial \Omega = \{0\} \times \mathbb{R}^{d-1}\\
 \Delta = \frac 1 { \kappa (y,z) }  \ ^t \nabla_{y,z}  \kappa (y,z) b(y,z) \nabla_{y,z}, \qquad  b \mid_{\partial \Omega} = \begin{pmatrix}  1& 0 \\  0&b'(z) \end{pmatrix}.
 \end{gathered}
 \end{equation}
 \end{prop}
   \begin{rem} In a geodesic coordinate system, we would have a diagonal form for the metric as in~\eqref{normal} in a neighborhood of the boundary. Proposition~\ref{geodesic} corresponds to the fact that our coordinate system is at the boundary ``tangent to a geodesic coordinate system". 
   \end{rem}
   Summarizing, we have defined a covering of $\partial M \subset \cup_{j=1}^N U'_j$, and  $W^{2, \infty}$ diffeomorphisms  
   $$ \psi'_j: U'_j \rightarrow \mathbb{R}^d= \mathbb{R}_s \times \mathbb{R}^{d-1}_x, $$ such that $\psi'_j (V_j)  \subset B(0, 1)_{s,x}\cap \{ s \geq 0\}$, and after the change of variables $\psi'_j$, the metric takes the form~\eqref{normal} on the boundary $\{s=0\}$.
   
   We can now perform the gluying by defining a covering of $\partial M$ (now seen as a subset of $ \widetilde{M}$),
   $$ \partial M \subset \cup_{j=1}^N  U'_j \times \{-1, 1\} = \cup_{j=1}^N \widetilde{U}_j$$
   where we identify the points in $U'_j \cap \partial M  \times \{-1, 1\}$, and define the map 
   $$ \Psi_j :  z \in U'_j \times \{ \epsilon \} \mapsto  \begin{cases} \psi'_j (x)  & \text{ if } \epsilon =1, \\ S \circ \psi'_j (x) & \text{ if } \epsilon =-1, \end{cases}
   $$ where 
  $$ S(s,z) = (-s,z).$$
  To conclude the proof of the first part of Theorem~\ref{double}, it remains to check that 
  \begin{itemize}
  \item The image of the metric induced on $\widetilde{M}$ by the metrics on the two copies of $\overline{M}$ is well defined and Lipschitz,
  \item The change of charts 
  $$\Psi_k \circ \Psi_j^{-1} : \Psi_j (\widetilde{U_k} \cap \widetilde{U_j}) \rightarrow \mathbb{R}^d$$
  are $W^{2, \infty}$.
  \item The density $\widetilde{\kappa} $ obtained by gluying the two copies of $\kappa$ on each copy of $M$ is $W^{1, \infty}$.
  \end{itemize}
  The first result follows from~\eqref{normal} because on  $\Psi_j (U'_j \times\{1\})$ the metric is given  by 
  $$b(s,x)1_{s\geq0},$$
  while on $\Psi_j (U'_j \times\{-1\})$, it is given by 
 \begin{equation}
 \label{metric}
   S' b\circ S(s,x) S' 1_{s\leq 0} = \begin{pmatrix}  -1& 0 \\  0&\text{Id}   \end{pmatrix}\begin{pmatrix}  1&  \ ^t r(s,x) \\  r(s,x)&b' (-s, x)  \end{pmatrix}\begin{pmatrix}  -1&0 \\   0&\text{Id}  \end{pmatrix}1_{s\leq 0},
 \end{equation}
 where from~\eqref{3.13}, $  r(0,x) =0$.
 
 As a consequence, the two metrics coincide on $\{ s=0\}$ and they define a Lipschitz metric on $(- \epsilon ' , \epsilon) \times B(0, \delta)$.
 To check the $W^{2, \infty}$ smoothness of the change of charts, we write
 $$ \Psi_k \circ \Psi_j^{-1} = \begin{cases} \psi'_k \circ( \psi'_j)^{-1} &\text{ on }  \{ s\geq 0\}, \\ 
 S \circ  \psi'_k \circ( \psi'_j)^{-1} \circ S &\text{ on }  \{ s\leq 0\}.
 \end{cases}
 $$
  Taking derivatives we get 
  \begin{equation}
  \label{derivative}
   d_{s,z} \Psi_k \circ \Psi_j^{-1} = \begin{cases} d_{s,z} (\phi'_k \circ( \phi'_j)^{-1}) &\text{ on }  \{ s> 0\}, \\ 
\begin{pmatrix}  -1& 0 \\  0&\text{Id}   \end{pmatrix} d_{s,z}    (\phi'_k \circ( \phi'_j)^{-1} )\begin{pmatrix}  -1& 0 \\  0&\text{Id}   \end{pmatrix} &\text{ on }  \{ s< 0\}. 
 \end{cases}
\end{equation}
  We now remark that by construction the differential $d_{y,x} \phi' \mid_{\partial M}$ sends the normal to the boundary to the normal to the boundary $\begin{pmatrix} 1\\0\end{pmatrix} $ and sends all  vectors tangent to the boundary  to tangent vectors $\begin{pmatrix} 0\\ Z' \end{pmatrix}$. As a consequence 
  \begin{equation}\label{derivativebis}d_{s,z} (\phi'_k \circ( \phi'_j)^{-1}) \mid_{\{ s=0\} }= \begin{pmatrix} 1 & 0 \\ 0 & \text{q(z)} \end{pmatrix}. 
  \end{equation}
  We deduce from~\eqref{derivativebis} that the two limits of the differentials $s\rightarrow 0^+$ and $s\rightarrow 0^-$ coincide 
   \begin{equation} \label{cont-deriv}
    d_{s,z} \Psi_k \circ \Psi_j^{-1}\mid_{s=0^+}  = d_{s,z} \Psi_k \circ \Psi_j^{-1}\mid_{s=0^-},
\end{equation}
and consequently, the differential is $C^0$. 
Let us now study the $L^\infty$ boundedness of derivatives of order $2$. The case of space derivatives  $d^2_{z,z}$ or $d^2_{s,z}$ is easy because we just have to take an additional tangential derivative $d_z$ in~\eqref{cont-deriv}. 
Such derivatives are tangent to the boundary $\{ s=0\}$ giving 
 $$ d^2_{z,z} \Psi_k \circ \Psi_j^{-1}\mid_{s=0^+}  = d^2_{z,z} \Psi_k \circ \Psi_j^{-1}\mid_{s=0^-} = d^2_{z,z} \phi'_k \mid_{\partial M} \circ (\phi'_j)^{-1} \mid_{\{ s=0\}}.
$$
  Finally, the case of $d^2_{s,s}$ derivative follows from the jump formula and the use of~\eqref{cont-deriv} which  shows that the first order derivatives have no jump, as
  \begin{multline}
  \frac{\partial^2} {\partial s^2}  \Bigl(\psi'_k \circ( \psi'_j)^{-1} 1_{  s> 0} + S_k \circ  \psi'_k \circ( \psi'_j)^{-1} \circ S_j 1_{s<0}\Bigr) \\
  = \frac{\partial^2} {\partial s^2}  \Bigl(\psi'_k \circ( \psi'_j)^{-1} \Bigr) 1_{  s> 0} + \frac{\partial^2} {\partial s^2}\Bigl( S_k \circ  \psi'_k \circ( \psi'_j)^{-1} \circ S_j 1_{s<0}\Bigr)\\+ \Bigl(\frac{\partial} {\partial s}  \bigl(\psi'_k \circ( \psi'_j)^{-1}\mid_{ s=0^+}\bigr)  - \frac{\partial} {\partial s} \bigl(S_k \circ  \psi'_k \circ( \psi'_j)^{-1} \circ S_j \mid_{ s=0^-}\bigr) \Bigr) \otimes \delta_{s=0} \\
  +  \Bigl( \psi'_k \circ( \psi'_j)^{-1}\mid_{ s=0^+}  - S_k \circ  \psi'_k \circ( \psi'_j)^{-1} \circ S_j \mid_{ s=0^-} \Bigr) \otimes \delta'_{s=0}\\
= \frac{\partial^2} {\partial s^2}  \Bigl(\psi'_k \circ( \psi'_j)^{-1} \Bigr) 1_{  s> 0} + \frac{\partial^2} {\partial s^2}\Bigl( S_k \circ  \psi'_k \circ( \psi'_j)^{-1} \circ S_j \Bigr)1_{s<0}.
\end{multline}
The last result for the density $\widetilde{\kappa}$ follows from this $W^{2, \infty}$ regularity of the change of charts.

It remains to prove the second part in Theorem~\ref{double} (about the eigenfunctions). Let $e$ be an eigenfunction of $- \Delta$ on $M$ with Dirichlet condition, associated to the eigenvalue $\lambda^2$. We define
$$ \widetilde{e} (x, \pm 1) = \pm e(x). $$ This definition makes sense because on the boundary $e(x) =0 = - e(x)$. Now we check that $\widetilde{e}$ is an eigenfunction of $ \widetilde{\Delta}$ on $\widetilde{M}$. Away from the boundary $\partial M$ this is clear while near a point $x \in \partial M \subset \widetilde{M}$ we can work in a coordinate chart $(\Psi_j, \widetilde{U}_j)$. In this coordinate chart, the function $\widetilde{e}$ is defined  by 
$$ \widetilde{e} (s,z) = \begin{cases} e(s,z) &\text{ if } s>0,\\
- e(-s,z) &\text{ if } s<0.
\end{cases}
$$
In $\{ \pm s >0\} $,  $\widetilde{e} $ satisfies $- \widetilde{\Delta} \widetilde{e} = \lambda^2 e$, and near $\partial M$ in our coordinate systems, we have  
\begin{equation}
\widetilde{e} (s,z) = e(s, z) 1_{s>0} - e(-s,z)1_{s<0}= e(s,z) 1_{s>0} + f(s,z) 1_{s<0},\qquad f(s,z) = - e(-s,z),
\end{equation}
and we have 
\begin{equation}\label{prem}
\begin{aligned}
 \nabla_x (\widetilde{e} )(s,x)&= (\nabla_x e) 1_{s>0} + (\nabla_x f) 1_{s<0},\\
  \partial_s (\widetilde{e} )(s,x)&= (\partial_s e) 1_{s>0} + (\partial_s f) 1_{s<0} + ( e(0^+,z)- f(0^-, z) )\otimes \delta_{s=0}\\
  &= (\partial_s e) 1_{s>0} + (\partial_s f) 1_{s<0} ,
  \end{aligned}
  \end{equation}
  where we used that according to Dirichlet boundary condition $e(0^+,z)= f(0^-, z)=0$. Now, according to~\eqref{normal}, we get 
  $$ \widetilde{b}(s,z)= \begin{pmatrix} b_{1,1}(s,z) & r(s,z) \\ ^t r(s,z) & b'(s,z) \end{pmatrix}
  $$
  with $b_{1,1}(0,z) =1$, $r(0,z) =0$, and we deduce from~\eqref{prem} and the jump formula
  \begin{multline}
  -\widetilde{\Delta} (\widetilde{e} )(s,x))= \lambda^2 \widetilde{e} (s,x) \\
  + \frac 1 {\kappa(0,z)}b_{1,1}(0,z) \Bigl( (\partial_s e) (0^+,z)- (\partial_s f) ( 0^-,z) + r(0,z) \bigl((\nabla_x e) (0^+,z) - (\nabla_x f) (0^-,z)\bigr) \Bigr) \otimes \delta_{s=0}
 \\ = \lambda^2 \widetilde{e} (s,x) 
  \end{multline}
  where here we used $r(0,z)=0$ and that since $f(s,z) = -e(-s,z)$ we have 
  $$ \partial_s f(0^-,z)= \partial_s e(0^+,z).$$
  This ends the proof of Theorem~\ref{double} for Dirichlet boundary conditions. The proof in the case of Neumann boundary conditions is similar by defining 
  $$ \widetilde{e} (s,z) = \begin{cases} e(s, z) &\text{ if } s>0\\
e(-s,z) &\text{ if } s<0.
\end{cases}
$$

  \section{Propagation of smallness for the heat equation} \label{sec.4}
  In this section we show how the first parts in Theorems~\ref{control} and~\ref{control-haus} (i.e. estimates~\eqref{obs} and~\eqref{obsbis}) follow from Theorem~\ref{spectral}. Here we follow closely~\cite[Section 2]{AEWZ14}, which in turn relied on a mixing of ideas from~\cite{mi10}, interpolation inequalities and the telescopic series method from~\cite{PhWa13}. Indeed Theorem~\ref{interpo1} is actually slightly more general than~\cite[Theorem 5]{AEWZ14}, as the constants do not depend on the distance to the boundary but only on the Lebesgue measure of $E$, and the interpolation exponent ($1-\epsilon $ below) can be taken arbitrarily close to $1$. 
The first step is to  deduce  interpolation inequalities from Theorem~\ref{spectral}.

\begin{thm}\label{interpo1}[\protect{compare with \cite[Theorem 6]{AEWZ14}}] Let $\epsilon \in (0,1)$ and $m >0$. Assume that $|E_1|\geq m$, or $\mathcal{C}_{\mathcal{H}}^{d- \delta} (E_2)>m$. Then there exist $N>0, C>0$ such that for all  $0 \leq s <t$, 
\begin{equation}\label{interpopo1}
\| e^{t\Delta } f \| _{L^2 (M)} \leq N e^{\frac {N}{t-s}}  \| e^{t \Delta } f \| _{L^1(E_1)}  ^ {1- \epsilon} \| e^{s \Delta } f\|_{L^2(M)} ^{\epsilon},  
\end{equation} and
\begin{equation}\label{interpopo2}
\| e^{t\Delta } f \| _{L^2 (M)} \leq N e^{\frac {N}{t-s}}  \| e^{t \Delta } f \| _{L^\infty(E_2)} ^ {1- \epsilon}  \| e^{s \Delta } f\|_{L^2(M)} ^{\epsilon}.  
\end{equation}
\end{thm}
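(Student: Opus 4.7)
The classical Lebeau--Robbiano frequency cutoff does the job: use Theorem~\ref{spectral} for the low-frequency part of $e^{t\Delta}f$ and heat-semigroup dissipation for the high-frequency tail, then optimize in the cutoff $\Lambda$.

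Fix $0\leq s<t$, set $g:=e^{t\Delta}f$ and $g_s:=e^{s\Delta}f$, so $g=e^{(t-s)\Delta}g_s$. Let $\Pi_\Lambda$ be the orthogonal projector in $L^2(M)$ onto eigenfunctions with $\lambda_k\leq\Lambda$, and split $g=\Pi_\Lambda g+(I-\Pi_\Lambda)g$. Since $\Pi_\Lambda g$ is a spectral sum with $\lambda_k\leq\Lambda$, Theorem~\ref{spectral} gives $\|\Pi_\Lambda g\|_{L^2(M)}\leq C e^{D\Lambda}\|\Pi_\Lambda g\|_{L^1(E_1)}$, while the spectral calculus yields $\|(I-\Pi_\Lambda)g\|_{L^2(M)}\leq e^{-\Lambda^2(t-s)}\|g_s\|_{L^2(M)}$. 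Combining these with $\|\Pi_\Lambda g\|_{L^1(E_1)}\leq\|g\|_{L^1(E_1)}+|M|^{1/2}\|(I-\Pi_\Lambda)g\|_{L^2(M)}$ produces the master estimate
\begin{equation*}
\|g\|_{L^2(M)}\leq c_1 e^{D\Lambda}\|g\|_{L^1(E_1)}+c_2 e^{D\Lambda-\Lambda^2(t-s)}\|g_s\|_{L^2(M)}.
\end{equation*}

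To obtain \eqref{interpopo1} I would optimize in $\Lambda$. In the nontrivial regime $\|g\|_{L^1(E_1)}<\|g_s\|_{L^2}$, pick $\Lambda=\sqrt{(1-\epsilon)\log(\|g_s\|_{L^2}/\|g\|_{L^1(E_1)})/(t-s)}$, so that $e^{-\Lambda^2(t-s)}=(\|g\|_{L^1(E_1)}/\|g_s\|_{L^2})^{1-\epsilon}$ and the master estimate becomes $\|g\|_{L^2}\leq(c_1+c_2)e^{D\Lambda}\|g\|_{L^1(E_1)}^{1-\epsilon}\|g_s\|_{L^2}^{\epsilon}$. The remaining factor $e^{D\Lambda}$ is absorbed via the Young-type bound $D\Lambda\leq\eta D^2(1-\epsilon)\log(\|g_s\|_{L^2}/\|g\|_{L^1(E_1)})+1/(4\eta(t-s))$ for any $\eta>0$: the logarithmic term only tilts the interpolation exponent from $1-\epsilon$ to $1-\epsilon-\eta D^2(1-\epsilon)$, which can be made as close to $1$ as desired by taking $\epsilon$ and $\eta$ small, and the residual factor $e^{1/(4\eta(t-s))}$ has the required form $e^{N/(t-s)}$. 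In the degenerate regime where $\|g\|_{L^1(E_1)}\geq\|g_s\|_{L^2}$ or $\Lambda$ would fall below $2D/(t-s)$, the trivial contraction $\|g\|_{L^2}\leq\|g_s\|_{L^2}$ already yields the estimate after modest bookkeeping.

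Estimate \eqref{interpopo2} follows the same scheme with the $L^\infty(E_2)$ bound of Theorem~\ref{spectral}, but the passage from $\|(I-\Pi_\Lambda)g\|_{L^\infty(E_2)}$ to $\|(I-\Pi_\Lambda)g\|_{L^2}$ is no longer free. Instead, Lemma~\ref{sobolev} combined with the elementary spectral inequality $\sup_k e^{-\lambda_k^2(t-s)/2}(1+\lambda_k)^{\sigma}\leq C(t-s)^{-\sigma/2}$ yields
\begin{equation*}
\|(I-\Pi_\Lambda)g\|_{L^\infty(M)}\leq C(t-s)^{-\sigma/2}e^{-\Lambda^2(t-s)/2}\|g_s\|_{L^2(M)},
\end{equation*}
whose polynomial prefactor is absorbed into $e^{N/(t-s)}$.

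\textbf{Main obstacle.} The delicate step is the optimization: a naive balancing of the two terms in the master estimate only produces an exponent $3/4$ on $\|g\|_{L^1(E_1)}$, so to make $1-\epsilon$ arbitrarily close to $1$ one needs the two-parameter Young inequality inside the exponent and careful tracking of the two regimes of $\Lambda$, while absorbing all subexponential corrections in $t-s$ into the single prefactor $e^{N/(t-s)}$.
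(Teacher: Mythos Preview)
Your proposal is correct and follows essentially the same route as the paper: the Lebeau--Robbiano splitting $g=\Pi_\Lambda g+(I-\Pi_\Lambda)g$, Theorem~\ref{spectral} on the low piece, $L^2$ dissipation (respectively $H^\sigma$ dissipation via Lemma~\ref{sobolev}) on the high piece, and then optimization in $\Lambda$.

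The only noteworthy difference is the order in which the optimization is carried out. You first fix $\Lambda$ to balance the two terms and then tame the leftover factor $e^{D\Lambda}$ by a second Young inequality, which forces you to track two small parameters $(\epsilon,\eta)$ and two regimes. The paper instead absorbs $e^{N\Lambda}$ \emph{before} optimizing, via the single inequality
\[
e^{N\Lambda}\le e^{\frac{N^2}{4\epsilon(t-s)}}\,e^{\epsilon\Lambda^2(t-s)},
\]
after which the master estimate reads
\[
\|g\|_{L^2}\le (N+1)\,e^{\frac{N^2}{4\epsilon(t-s)}}\Bigl(e^{\epsilon\Lambda^2(t-s)}\|g\|_{L^1(E_1)}+e^{-(1-\epsilon)\Lambda^2(t-s)}\|g_s\|_{L^2}\Bigr),
\]
and a single choice $e^{\Lambda^2(t-s)}=\|g_s\|_{L^2}/\|g\|_{L^1(E_1)}$ yields \eqref{interpopo1} directly with exponent $1-\epsilon$. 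So the ``main obstacle'' you describe is an artifact of doing the two steps in the harder order; with the paper's ordering the arbitrary exponent comes out in one line and no case distinction is needed.
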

\begin{cor}\label{cor44bis}
 Let $m >0$. Assume that $|E_1|\geq m$, or $\mathcal{C}_{\mathcal{H}}^{d- \delta} (E_2)>m$. Then for any $D, B\geq 1$ there exists $A>0, C>0$ such that that for all $0< t_1 <t_2\leq T$, 
 \begin{equation}\label{eq46bis}
 e^{- \frac{A}{(t_2 -t_1)}} \| e^{t_2 \Delta} f \|_{L^2(M)} - e^{- \frac{DA}{(t_2- t_1)}} \| e^{t_1 \Delta} f \|_{L^2(M)} \leq C e^{- \frac{B}{(t_2 -t_1)}})\| e^{t_2\Delta} f \|_{L^1(E_1)}, 
 \end{equation}
 \begin{equation}\label{eq46ter}
 e^{- \frac{A}{(t_2 -t_1)}} \| e^{t_2 \Delta} f \|_{L^2(M)} - e^{- \frac{DA}{(t_2- t_1)}} \| e^{t_1 \Delta} f \|_{L^2(M)} \leq Ce^{- \frac{B}{(t_2 -t_1)}} \| e^{t_2\Delta} f \|_{L^\infty(E_2)}, 
 \end{equation}
or for any  $J$, $|J \cap (t_1, t_2)| \geq \frac{ (t_2 -t_1)} 3$, 
    \begin{equation}\label{eq46inf}
 e^{- \frac{A}{(t_2 -t_1)}} \| e^{t_2 \Delta} f \|_{L^2(M)} - e^{- \frac{DA}{(t_2- t_1)}} \| e^{t_1 \Delta} f \|_{L^2(M)} \leq C \int_{t_1} ^{t_2} 1_{J}(s) \| e^{s\Delta} f \|_{L^1(E_1)} ds,
 \end{equation}

 \begin{equation}\label{eq46}
 e^{- \frac{A}{(t_2 -t_1)}} \| e^{t_2 \Delta} f \|_{L^2(M)} - e^{- \frac{DA}{(t_2- t_1)}} \| e^{t_1 \Delta} f \|_{L^2(M)} \leq C \int_{t_1} ^{t_2} 1_{J}(s) \| e^{s\Delta} f \|_{L^\infty(E_2)} ds.
 \end{equation}
  \end{cor}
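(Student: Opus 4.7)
The plan is to derive \eqref{eq46bis}--\eqref{eq46} directly from Theorem~\ref{interpo1} by decoupling the product $Y^{1-\epsilon}Z^\epsilon$ via Young's inequality, with the decoupling parameter chosen as an exponential function of $1/(t_2-t_1)$. Throughout, write $\tau = t_2 - t_1$, $X = \|e^{t_2\Delta}f\|_{L^2(M)}$, $Z = \|e^{t_1\Delta}f\|_{L^2(M)}$, and let $Y$ denote either $\|e^{t_2\Delta}f\|_{L^1(E_1)}$ or $\|e^{t_2\Delta}f\|_{L^\infty(E_2)}$. Theorem~\ref{interpo1} yields $X \leq Ne^{N/\tau} Y^{1-\epsilon}Z^\epsilon$. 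Writing this as
\begin{equation*}
Ne^{N/\tau}Y^{1-\epsilon}Z^\epsilon = \bigl[(Ne^{N/\tau})^{1/(1-\epsilon)} \eta\,Y\bigr]^{1-\epsilon}\bigl[\eta^{-(1-\epsilon)/\epsilon} Z\bigr]^\epsilon
\end{equation*}
for a free parameter $\eta > 0$ and applying $a^{1-\epsilon}b^\epsilon \leq (1-\epsilon)a + \epsilon b$ gives $X \leq (1-\epsilon)(Ne^{N/\tau})^{1/(1-\epsilon)}\eta\,Y + \epsilon\,\eta^{-(1-\epsilon)/\epsilon}Z$.

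Next I would choose $\eta = e^{M/\tau}$ and multiply through by $e^{-A/\tau}$. The coefficient of $Y$ becomes comparable to $e^{(M + N/(1-\epsilon) - A)/\tau}$, and the coefficient of $Z$ becomes comparable to $e^{-(M(1-\epsilon)/\epsilon + A)/\tau}$. To match the target form, I need $M + N/(1-\epsilon) - A \leq -B$ and $M(1-\epsilon)/\epsilon + A \geq DA$. Taking $M = A/2$, the first condition reduces to $A \geq 2(B + N/(1-\epsilon))$ while the second becomes $\epsilon \leq 1/(2D-1)$. Since in Theorem~\ref{interpo1} $\epsilon$ can be chosen as small as desired, both constraints are compatible: fix $\epsilon$ accordingly, then choose $A = A(B,D,N,\epsilon)$ large enough. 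Rearranging produces \eqref{eq46bis} and \eqref{eq46ter}.

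For the integral versions \eqref{eq46inf} and \eqref{eq46}, I would set $J' = J \cap (t_1 + \tau/6,\, t_2)$. Since $|J \cap (t_1, t_2)| \geq \tau/3$ and the removed portion $(t_1, t_1 + \tau/6)$ has measure $\tau/6$, we get $|J'| \geq \tau/6$, and every $t \in J'$ satisfies $t - t_1 \geq \tau/6$. Applying Theorem~\ref{interpo1} with the pair $(t, t_1)$ and using the $L^2$-contractivity $\|e^{t_2\Delta}f\|_{L^2} \leq \|e^{t\Delta}f\|_{L^2}$ (valid since $t \leq t_2$) yields
\begin{equation*}
X \leq N e^{6N/\tau}\,\|e^{t\Delta}f\|_{L^p(E)}^{1-\epsilon}\,Z^\epsilon.
\end{equation*}
Choose $t^\ast \in J'$ at which $\|e^{t\Delta}f\|_{L^p(E)}$ is bounded by its mean over $J'$, so that $\|e^{t^\ast\Delta}f\|_{L^p(E)} \leq (6/\tau)\int_{t_1}^{t_2} 1_{J}(s)\,\|e^{s\Delta}f\|_{L^p(E)}\,ds$. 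Substitute and apply the Young decoupling from Step~2 to $Y := \|e^{t^\ast\Delta}f\|_{L^p(E)}$; the polynomial factor $6/\tau$ is absorbed into the exponential prefactor since $1/\tau \leq C_\varepsilon e^{\varepsilon/\tau}$ for any $\varepsilon > 0$.

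The main obstacle here is purely bookkeeping: organizing the parameters $\epsilon, M, A$ so that the two constraints on the exponential coefficients are simultaneously satisfiable. The only genuinely analytic input beyond Theorem~\ref{interpo1} is the trivial mean-value selection of $t^\ast$ and the $L^2$-decay of the heat semigroup; no new regularity or geometric arguments are needed.
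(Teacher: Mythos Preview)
Your proof is correct and follows essentially the same route as the paper: Young's inequality with an exponential decoupling parameter for \eqref{eq46bis}--\eqref{eq46ter}, and the reduction to $J' = J \cap (t_1+\tau/6,t_2)$ for the integral versions. The only cosmetic difference is that for \eqref{eq46inf}--\eqref{eq46} the paper integrates the pointwise interpolation bound over $J'$ and applies H\"older, whereas you select a single $t^\ast\in J'$ below the mean; the two are equivalent and lead to the same estimate after absorbing the $1/\tau$ factor into the exponential.
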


\begin{proof} Let us first prove Corollary~\ref{cor44bis} from Theorem~\ref{interpo1} adapting ~\cite{AEWZ14}. 
Let $A>0$. From Theorem~\eqref{interpo1} we get, using Young inequality $ab \leq (1- \epsilon)a^{1/(1- \epsilon)} + \epsilon b^{\epsilon ^{-1}}$, 
 \begin{multline}
 e^{-\frac{A}{(t_2- t_2)} }  \| e^{t_2 \Delta} u\|_{L^2(M)} \leq N e^{\frac{N-A}{(t_2-t_1)} }\| e^{t_2\Delta} f \|_{L^\infty(E_1)}^{(1- \epsilon)} \| e^{t_1 \Delta} f \|_{L^2(M)}^{\epsilon}\\
 \leq Ne^{-\frac{A}{2(t_2- t_2)} }\| e^{t_2\Delta} f \|_{L^\infty(E_1)}^{(1- \epsilon)}    e^{\frac{N-A/2}{(t_2-t_1)} } \| e^{t_1 \Delta} f \|_{L^2(M)}^{\epsilon}\\
 \leq N^ {(1- \epsilon)^{-1}}(1- \epsilon) e^{-\frac{A}{2(1- \epsilon)(t_2- t_2)} } \| e^{t_2\Delta} f \|_{L^\infty(E_1)} +  \epsilon e^{ \frac{N-A/2}{\epsilon (t_2-t_1)}} \| e^{t_1 \Delta} f \|_{L^2(M)}
 \end{multline}
and~\eqref{eq46bis} follows from choosing $2\epsilon < D^{-1} $ in Theorem~\ref{interpo1} and then 
 $$ A\geq 2 B, \qquad \frac{A/2- N} \epsilon >DA .$$
 The proof of ~\eqref{eq46ter} is similar. Let us now turn to the proof of~\eqref{eq46}. 
  From the assumption $|J \cap (t_1, t_2)| \geq \frac{ (t_2 -t_1)} 3$, we deduce
 \begin{equation}\label{minor}
 |J \cap (t_1+ \frac{ t_2 - t_1} 6, t_2)| \geq \frac{ (t_2 -t_1)} 6.
 \end{equation}
 Now, we have from~\eqref{interpopo2}, for $ t\in (t_1+ \frac{ t_2 - t_1} 6, t_2)$, 
 \begin{multline}
  \| e^{t_2\Delta } f \| _{L^2 (M)} \leq \| e^{t\Delta } f \| _{L^2 (M)} \leq  N e^{\frac {N}{t-t_1}} \| e^{t \Delta } f \| _{L^1(E_2)} ^ {1- \epsilon} \| e^{t_1 \Delta } f\|_{L^2(M)} ^{\epsilon}  \\
  \leq  N e^{\frac {6N}{t_2-t_1}} \| e^{t \Delta } f \| _{L^1(E_2)}  ^ {1-\epsilon} \| e^{t_1 \Delta } f\|_{L^2(M)} ^{\epsilon}. 
  \end{multline}
 Integrating this inequality on $J \cap(t_1+ \frac{ t_2 - t_1} 6, t_2)$ and using H\"older inequality gives  
 \begin{multline}
  |J \cap (t_1+ \frac{ t_2 - t_1} 6, t_2)|  \| e^{t_2\Delta } f \| _{L^2 (M)} \\
  \leq N e^{\frac {6N}{t_2-t_1}} \Bigl(\int_{t_1+ \frac{ t_2 - t_1} 6} ^{t_2} 1_{J}(t)\| e^{t \Delta } f \| _{L^1(E)} dt\Bigr)^{1- \epsilon}  \| e^{t_1 \Delta } f\|_{L^2(M)} ^{\epsilon},
  \end{multline}
which using~\eqref{minor} (and replacing $6N$ by $6N+1$) gives
 \begin{equation}
  \| e^{t_2\Delta } f \| _{L^2 (M)} \\
  \leq N e^{\frac {6N+1}{t_2-t_1}} \Bigl(\int_{t_1+ \frac{ t_2 - t_1} 6} ^{t_2} 1_{J}(t)\| e^{t \Delta } f \| _{L^1(E)} dt\Bigr)^{1- \epsilon} \| e^{t_1 \Delta } f\|_{L^2(M)} ^{\epsilon}.
  \end{equation}
The rest of the proof of~\eqref{eq46} follows now the same lines as the proof of~\eqref{eq46ter}. Finally the proof of~\eqref{eq46inf} is similar.
 \end{proof}
\begin{rem} \label{rem4.2}The proof above shows that in~\eqref{eq46inf} and~\eqref{eq46}, we can replace the sets $E_1$, $E_2$ by sets $E_1(t)$, $E_2(t)$ if we assume that $|E_1(t)|\geq m $ or $\mathcal{C}^{d- \delta}_{\mathcal{H}}(E_2(t)) \geq m$ {\em uniformly} with respect to $s\in I$, so that we can apply~Theorem~\ref {interpo1} with sets $E_1(t)$ and $E_2(t)$.
\end{rem}
\begin{proof}[Proof of Theorem~\ref{interpo1}]
Let $0 \leq s < t$ and for $f \in L^2(M)$ let
$$ f = \Pi_\Lambda f + \Pi^\Lambda f,$$ where $\Pi_\Lambda$ is the orthogonal projector on the vector space generated by $\{ e_k; \lambda _k \leq \Lambda\}$.  
We have 
\begin{equation}\label{interpo5}
 \begin{aligned}  \| e^{t \Delta } f \|_{L^2(M)}& \leq \| e^{t \Delta }\Pi_\Lambda f\| _{L^2(M)} + \|  e^{t \Delta } \Pi^\Lambda f\| _{L^2(M)}  \\
 & \leq N e^{N \Lambda } \| e^{t \Delta }  \Pi_\Lambda f\| _{L^1(E_1)} + \|  e^{t \Delta } \Pi^\Lambda f\| _{L^2(M)}\\
 & \leq N e^{N \Lambda } \Bigl(\| e^{t \Delta }  f\|_{L^1(E_1)} +\|e^{t \Delta} \Pi^\Lambda f\|_{L^2(M)} \Bigr) + \|  e^{t \Delta } \Pi^\Lambda f\| _{L^2(M)}  \\
 & \leq (N+1) e^{N \Lambda } \Bigl(\| e^{t \Delta }  f\| _{L^1(E_1)} +e^{- \Lambda^2 (t-s)}\| e^{s \Delta} \Pi^\Lambda f\|_{L^2(M)} \Bigr)\\
 & \leq (N+1) e^{N \Lambda } \Bigl(\| e^{t \Delta }  f\| _{L^1(E_1)} +e^{- \Lambda^2 (t-s)}\| e^{s \Delta}  f\|_{L^2(M)} \Bigr),\\
  \end{aligned}
 \end{equation}

 Since 
 $$ \sup _{\Lambda \geq 0} e^ {N \Lambda - \epsilon{\Lambda^2}  (t-s)} = e^{\frac{N^2} {4\epsilon (t-s)}},$$
 we deduce
 \begin{equation}
 \label{interpo3}
  \| e^{t \Delta } f \|_{L^2(M)} \leq (N+1) e^{\frac{N^2} {4\epsilon (t-s)}} \Bigl( e^{\epsilon {\Lambda^2}  (t-s)} \| e^{t \Delta }  f\| _{L^1(E_1)}  + e^{- (1- \epsilon) {\Lambda^2} (t-s)} \|e^{s \Delta}  f\|_{L^2(M)}\Bigr).\\
 \end{equation}
 Since $\Lambda$ is a free parameter, and $(t-s)>0$, we can minimize the r.h.s. of~\eqref{interpo3} with respect to the parameter $\alpha = e^{-\frac{\Lambda^2} 2 (t-s) } \in (0,1)$, by choosing
 $$ e^{\Lambda^2 (t-s) } = \frac {\|e^{s \Delta}  f\|_{L^2(M)}}{\| e^{t \Delta }  f\| _{L^1(E_1)}},$$
 which gives 
\begin{equation}
  \| e^{t \Delta } f \|_{L^2(M)} \leq 2(N+1) e^{\frac{N^2} {4 \epsilon (t-s)}} \Bigl(  \| e^{t \Delta }  f\| ^2_{L^1(E_1)}\Bigr) ^{ 1- \epsilon}  \Bigl(\|e^{s \Delta}  f\|_{L^2(M)}\Bigr)^{\epsilon},
 \end{equation}
and thus ~\eqref{interpopo1} follows.

To prove \eqref{interpopo2} we have to adapt the method. 
We get using Lemma~\ref{sobolev}
\begin{equation}\label{interpo6}
 \begin{aligned}  \| e^{t \Delta } f \|_{L^2(M)}& \leq \| e^{t \Delta }\Pi_\Lambda f\| _{L^2(M)} + \|  e^{t \Delta } \Pi^\Lambda f\| _{L^2(M)}  \\
 & \leq N e^{N \Lambda } \| e^{t \Delta }  \Pi_\Lambda f\| ^2_{L^\infty(E_2)} + \|  e^{t \Delta } \Pi^\Lambda f\| _{L^2(M)}\\
 & \leq N e^{N \Lambda } \Bigl(\| e^{t \Delta }  f\| _{L^\infty(E_2)} +\|e^{t \Delta} \Pi^\Lambda f\|_{\mathcal{H}^{\sigma}(M)} \Bigr) + \|  e^{t \Delta } \Pi^\Lambda f\| _{L^2(M)}  \\
 & \leq (N+1) e^{N \Lambda } \Bigl(\| e^{t \Delta }  f\| _{L^\infty(E_2)} +\| e^{t \Delta} \Pi^\Lambda f\|_{{\mathcal{H}^{\sigma}}(M)} \Bigr).\\
  \end{aligned}
 \end{equation}
Let us study the quantity
$$\| e^{t \Delta}  \Pi^\Lambda  f\|^2_{\mathcal{H}^{\sigma}(M)} = \sum_{\lambda_k > \Lambda}  \Bigl( e^{-2\lambda_k^2 (t-s)}  \lambda_k^{2\sigma} \Bigr) e^{-2 \lambda_k^2 s} |f_k|^2.
$$ 
Since
 $$ \sup _{\lambda_k \geq \Lambda  } e^ {- 2\epsilon{\lambda_k^2}  (t-s)}  \lambda_k^{2\sigma} \leq \bigl( \Lambda + \frac{\sigma} {  \epsilon (t-s) }\Bigr)^{2\sigma},$$
 we deduce 
\begin{multline}
\| e^{t \Delta}  \Pi^\Lambda  f\|^2_{\mathcal{H}^{\sigma}(M)} \leq \bigl( \Lambda + \frac{\sigma} { 2 \epsilon (t-s) }\Bigr)^{2\sigma} \sum_{\lambda_k > \Lambda}  \Bigl( e^{-2(1- \epsilon)\lambda_k^2 (t-s)}  \Bigr) e^{-2 \lambda_k^2 s} |f_k|^2\\
\leq \bigl( \Lambda + \frac{\sigma} { 2 \epsilon (t-s) }\Bigr)^{2\sigma}  e^{-2(1- \epsilon)\Lambda^2 (t-s)}  \| e^{s \Delta } f\|_{L^2}\\
\leq C_{\epsilon, \sigma} e^\Lambda e^{-2(1- 2\epsilon)\Lambda^2 (t-s)}  \| e^{s \Delta } f\|_{L^2}
\end{multline} 
and coming back to~\eqref{interpo6}, we get
  \begin{equation}
 \label{interpo3bis}
  \| e^{t \Delta } f \|_{L^2(M)} \leq (N_{\epsilon, \sigma}) e^{{(N+1)} \Lambda } \Bigl(\| e^{t \Delta }  f\| _{L^\infty(E)} +  e^{-2(1- 2\epsilon)\Lambda^2 (t-s)}  \| e^{s \Delta } f\|_{L^2} \Bigr).\end{equation}
The rest of the proof of Theorem~\ref{interpo1} follows by the same optimization argument as before.
 \end{proof}
 
  Once  Corollary~\ref{cor44bis} is established, the rest of the proof of~\eqref{obs}, \eqref{obsbis}, \eqref{obster} and \eqref{obsterbis} and follows closely~\cite[Section 2]{AEWZ14}. For completeness we recall the proof. 
Let us start with the simpler~\eqref{obster}. From~\eqref{eq46bis}, with $t_1= s_{n+1}, t_2= s_n$,  and $D= \tau^{-1}$ we have
 \begin{equation}
 e^{- \frac{A}{(s_n -s_{n+1})}} \| e^{s_n \Delta} f \|_{L^2(M)} - e^{- \frac{DA}{(s_n - s_{n+1})}} \| e^{s_{n+1} \Delta} f \|_{L^2(M)} \leq C e^{- \frac{B}{(s_n- s_{n+1})}}\| e^{s_n\Delta} f \|_{L^1(E_1)} 
 \end{equation}
 Since $(s_{n+1}- s_{n+2}) \geq \frac{ s_n- s_{n+1}}{D}$, we deduce
  \begin{multline}\label{telescopic}
 e^{- \frac{A}{(s_n -s_{n+1})}} \| e^{s_n \Delta} f \|_{L^2(M)} - e^{- \frac{A}{(s_{n+1} - s_{n+2})}} \| e^{s_{n+1} \Delta} f \|_{L^2(M)}\\
  \leq C e^{- \frac{B}{(s_n- s_{n+1})}}\| e^{s_n\Delta} f \|_{L^1(E_1)} \leq C' e^{- \frac{B-1}{(s_n-s_{n+1})}}\| (s_n - s_{n+1})e^{s_n\Delta} f \|_{L^1(E_1)}. 
 \end{multline}
 Summing the telescopic series~\eqref{telescopic}, and using that 
 $$e^{- \frac{A}{(s_n -s_{n+1})}} \| e^{s_n \Delta} f \|_{L^2(M)}\leq e^{- \frac{A}{(s_n -s_{n+1})}}\| f\|_{L^2(M)} \rightarrow _{n\rightarrow + \infty} 0,$$
 we get (recall that $s_0=T$),
 \begin{multline}
 e^{- \frac{A}{(T-s_{1})}} \| e^{T \Delta} f \|_{L^2(M)}\leq C \sum_{n=0}^{+\infty} e^{- \frac{B-1}{(s_{n} - s_{n+1}) }}(s_n - s_{n+1})\| e^{s_n\Delta} f \|_{L^1(E_1)} \\
 \leq C \sup_n e^{- \frac{B-1}{(s_{n} - s_{n+1})}}\| e^{s_n\Delta} f \|_{L^1(E_1)},
 \end{multline} which proves~\eqref{obster}. The proof of~\eqref{obsterbis} is the same.
 
  To prove~\eqref{obsbis} we need the following Lemma from~\cite{PhWa13} about the structure of density points of sets of positive measure on $(0,T)$. 

 \begin{lem}[\protect{\cite[Proposition 2.1]{PhWa13}}]Let $J$  be a subset of positive measure in $(0,T)$. Let $l$ be a density point of $J$. Then for any $z>1$ there exists $l_1\in (l,T)$ such that the sequence defined by 
 $$ l_{m+1} -l = z^{-m} (l_1-l)$$
 satisfies 
 $$ | J \cap (l_{m+1}, l_m) | \geq \frac{(l_m - l_{m+1}) }3.
$$
 \end{lem}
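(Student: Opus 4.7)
The plan is to exploit the one-sided density condition at $l$, namely
\begin{equation*}
\lim_{r \to 0^+} \frac{|J^c \cap (l, l+r)|}{r} = 0,
\end{equation*}
which follows from $l$ being a Lebesgue density point of $J$. The idea is to pick $l_1$ close enough to $l$ so that, at every scale $l_m - l_{m+1}$ in the geometric progression with ratio $1/z$, the density deficit of $J$ is small enough to leave at least one-third of the measure of $(l_{m+1},l_m)$ inside $J$.

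Given $z > 1$, the first step is to calibrate the threshold $\eta := 2(z-1)/(3z) \in (0, 2/3)$, and to use the density property to select $r_0 > 0$ with
\begin{equation*}
|J^c \cap (l, l+r)| \leq \eta\, r \qquad \text{for all } r \in (0, r_0].
\end{equation*}
Next I would pick $l_1 \in (l, T)$ with $l_1 - l \leq r_0$ (possible since $l \in (0, T)$), and define $l_m := l + z^{-(m-1)}(l_1 - l)$, so that $l_m \searrow l$ and a direct computation gives
\begin{equation*}
l_m - l_{m+1} = z^{-m}(z-1)(l_1 - l), \qquad l_m - l = \frac{z}{z-1}\,(l_m - l_{m+1}).
\end{equation*}
The conclusion then follows from the inclusion $(l_{m+1}, l_m) \subset (l, l_m)$ combined with the density bound:
\begin{equation*}
|J^c \cap (l_{m+1}, l_m)| \leq |J^c \cap (l, l_m)| \leq \eta\,(l_m - l) = \frac{\eta z}{z-1}\,(l_m - l_{m+1}) = \frac{2}{3}(l_m - l_{m+1}),
\end{equation*}
so that $|J \cap (l_{m+1}, l_m)| \geq \frac{1}{3}(l_m - l_{m+1})$ for every $m \geq 1$.

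There is no serious obstacle: the argument is a direct calibration of the density deficit against the common ratio $z$ of the geometric progression, with the threshold $\eta$ chosen precisely to make $\eta z/(z-1) = 2/3$. The only point to be vigilant about is that the density-point property must be used in its uniform quantitative form on the whole interval $(0, r_0]$, not merely as an asymptotic statement; this is exactly what the definition of Lebesgue density point supplies, once the target threshold $\eta$ has been fixed.
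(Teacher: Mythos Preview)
Your argument is correct. The calibration $\eta = 2(z-1)/(3z)$ is exactly the right choice, and the chain of inequalities
\begin{equation*}
|J^c \cap (l_{m+1}, l_m)| \leq |J^c \cap (l, l_m)| \leq \eta\,(l_m - l) = \tfrac{2}{3}(l_m - l_{m+1})
\end{equation*}
goes through cleanly once you have verified $l_m - l \leq l_1 - l \leq r_0$ for all $m \geq 1$, which you implicitly use and which holds since $z > 1$.

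As for comparison with the paper: the paper does not prove this lemma at all. It is stated with a citation to \cite[Proposition~2.1]{PhWa13} and then used directly in the telescopic-series argument. Your write-up therefore supplies a self-contained proof where the paper relies on an external reference; the approach you give is the standard one and is essentially the same as in the original source.
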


Now,  we apply this result with $z=2$ and from~\eqref{eq46} with $D=2$ and $t_1= l_{m+1}, t_2= l_{m}$ we get
 \begin{equation}
 e^{-\frac{M} {l_m - l_{m+1}} } \| e^{l_m \Delta} f\|_{L^2(M)} -  e^{-\frac{2M} {(l_m - l_{m+1})} } \| e^{l_{m+1}\Delta }f\|_{L^2(M)}\leq C \int_{l_{m+1}}^{l_m} 1_{J}(s) \| e^{s\Delta} f \|_{L^\infty(E)} ds 
 \end{equation}

 Noticing that $\frac 2 {(l_m - l_{m+1})}=\frac 1 {( l_{m+1} - l_{m+2})}$,  we get
 \begin{equation}\label{telesc}
 e^{-\frac{M} {l_m - l_{m+1}} } \| e^{l_m \Delta} f\|_{L^2(M)} -  e^{-\frac{M} {l_{m+1} - l_{m+2}} } \| e^{l_{m+1}\Delta }f\|_{L^2(M)}\leq C \int_{l_{m+1}}^{l_m} 1_{J}(s) \| e^{s\Delta} f \|_{L^\infty(E)} ds 
 \end{equation}
 summing the telescopic series~\eqref{telesc}, and using that 
$$ \lim_{m\rightarrow + \infty} e^{-\frac{M} {l_{m+1} - l_{m+1}} } =0, $$ we get
 $$e^{-\frac{M} {l_1 - l_{2}} } \| e^{l_1 \Delta} f\|_{L^2(M)} \leq C \int_{l_1}^{l} 1_{J}(s) \| e^{s\Delta} f \|_{L^\infty(E)} ds,  $$
 which (since $T>l_1$) implies~\eqref{obsbis}.
 
 To prove~\eqref{obs}, we need  an elementary consequence of Fubini's Theorem.

 \begin{lem}
 Let $F\subset M\times (0,T)$ a set of positive Lebesgue measure. Working in coordinates, we can assume that $F\subset B(x_0, r_0)\times (0,T)$. 
 For almost every $t \in (0,T)$ the sets 
 $$E_t = F \cap \{ M \times \{t\}\} \qquad \textrm{and} \qquad J = \{ t \in (0,T); | E_t| \geq\frac{|F|} {2T}\}  $$ 
 are measurable and   
 $$|J|\geq \frac{|F|}{2T |B(x_0, r_0) |}.$$
 \end{lem}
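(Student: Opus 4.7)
The plan is a straightforward application of Fubini's theorem together with a pigeonhole-style splitting of an integral. The statement is essentially the quantitative observation that if $F$ has positive product measure, then the slice measure $|E_t|$ cannot be too small on too large a portion of $(0,T)$.

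First, by Fubini's theorem applied to the indicator $1_F$ on $(M\cap B(x_0,r_0)) \times (0,T)$, for almost every $t\in (0,T)$ the slice $E_t = F \cap (M\times\{t\})$ is Lebesgue measurable, the map $t\mapsto |E_t|$ is measurable on $(0,T)$, and
\begin{equation*}
|F| \;=\; \int_0^T |E_t|\, dt.
\end{equation*}
Measurability of $J$ is then immediate: it is the preimage of $[|F|/(2T),+\infty)$ under the measurable function $t\mapsto |E_t|$.

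Next, I split the Fubini integral according to whether $t\in J$ or $t\in (0,T)\setminus J$:
\begin{equation*}
|F| \;=\; \int_J |E_t|\, dt \;+\; \int_{(0,T)\setminus J} |E_t|\, dt.
\end{equation*}
On the complement $(0,T)\setminus J$ one has $|E_t| < |F|/(2T)$ by definition, so the second integral is bounded by $T\cdot |F|/(2T) = |F|/2$. On $J$, since $E_t\subset B(x_0,r_0)$, one has the trivial bound $|E_t|\leq \mathrm{Vol}(B(x_0,r_0))$, hence the first integral is at most $|J|\cdot \mathrm{Vol}(B(x_0,r_0))$. Combining,
\begin{equation*}
|F| \;\leq\; |J|\cdot \mathrm{Vol}(B(x_0,r_0)) \;+\; \tfrac{1}{2}|F|,
\end{equation*}
which rearranges to $|J|\geq |F|/(2\,\mathrm{Vol}(B(x_0,r_0)))$, a fortiori giving the stated bound (after trivially inserting $T$ in the denominator if needed; the extra factor $T$ in the stated bound makes the inequality strictly weaker and hence also valid).

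No serious obstacle is expected: the argument is purely measure-theoretic bookkeeping. The only care is that $E_t$ must be checked to be measurable for a.e.\ $t$ (handled by Fubini) and that the bound $|E_t|\leq \mathrm{Vol}(B(x_0,r_0))$ indeed holds because we have localized in coordinates to the ball $B(x_0,r_0)$. The role of this lemma in the proof of \eqref{obs} will be to furnish a time set $J$ on which every slice $E_t$ has measure bounded below uniformly, so that Theorem~\ref{interpo1} combined with Remark~\ref{rem4.2} applies with a constant depending only on $|F|$, $T$, and $\mathrm{Vol}(B(x_0,r_0))$.
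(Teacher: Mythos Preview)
Your proof is correct and follows exactly the same approach as the paper: apply Fubini to write $|F|=\int_0^T |E_t|\,dt$, split the integral over $J$ and its complement, and bound each piece by $|J|\,\mathrm{Vol}(B(x_0,r_0))$ and $|F|/2$ respectively. You are also right that the extra factor $T$ in the stated bound is superfluous; the paper's own one-line proof yields the same stronger inequality $|J|\geq |F|/(2\,\mathrm{Vol}(B(x_0,r_0)))$.
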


	\begin{proof}
	Indeed, from Fubini, 
 $$ |F| = \int_J |E_t| dt + \int_{(0,T) \setminus J} |E_t| dt \leq |J| |B(x_0, r_0)| + \frac{ |F| }{2}. $$
	\end{proof}

 Now, the proof of~\eqref{obs} follows exactly the same lines as the proof of~\eqref{obsbis} above by noticing that~\eqref{interpopo1} will hold for $E=E_t$ with constants that are uniform with respect to $t \in I$ (because then $|E_t| \geq \frac{|F|}{2T}$), see Remark~\ref{rem4.2}.

\section{Control for heat equations on ``very small sets"} \label{sec.5} 
Here we give the proof of the exact controllability parts in Theorems~\ref{control-haus} and~\ref{control-hausbis} (this part in Theorem~\ref{control} is very classical and we shall leave it to the reader). We start with Theorems~\ref{control-haus}. Since $J \subset (0,T)$ has positive Lebesgue measure, it is also the case for $J \cap (\epsilon, T)$ for some $0<\epsilon< T$, and as a consequence, we we can assume $J \subset (\epsilon, T)$.  By subadditivity of the Hausdorff content 
$$ \mathcal{C}^{r}_{\mathcal{H}} \left(\bigcup_{j=1}^{+\infty} A_j \right) \leq \sum_{j=1}^{+\infty}  \mathcal{C}^{r}_{\mathcal{H}} (A_j),$$ with 
$$ A_j = E \cap \{ x\in M, d(x, \partial M) \geq\frac 1 j\}, \qquad j \in \N,$$
we deduce that there exists $j_0$ such that 
$$ \mathcal{C}^{d- \delta}_{\mathcal{H}} (E \cap A_{j_0}) >0$$
 because otherwise we would have 
 $$\mathcal{C}^{d- \delta}_{\mathcal{H}} (E \cap \cup_{j=1}^{+\infty}A_{j_0})= \mathcal{C}^{d- \delta}_{\mathcal{H}}(E \setminus \partial M) =0 \Rightarrow \mathcal{C}^{d- \delta}_{\mathcal{H}}(E) =0.$$
 As a consequence, replacing $E$ by $E\cap A_{j_0}$, we can assume that 
 \begin{equation}
 \label{eq.disjoint}
  \exists \epsilon >0 \qquad \textrm{such that} \qquad  \forall x \in E, \quad d(x, \partial M) > \epsilon.
  \end{equation}
 For $w_0 \in L^2(M)$ let $w= e^{(T-t) \Delta} w_0$ be the solution to the backward heat equation
\begin{equation}\label{eq-obs2}
\begin{gathered}
(\partial_t + \Delta ) w = 0, \qquad u \mid_{t=T} = w_0,\\
w \mid_{\partial M} =0 \text{ (Dirichlet condition) or } \partial_\nu w \mid_{\partial M} =0 \text{ (Neumann condition)}.
\end{gathered}
\end{equation}
Let $\sigma$ be as in Lemma~\ref{sobolev}. Let us notice that for any $\epsilon >0$,  we have $w\in C^0([\epsilon, T]; \mathcal{H}^{\sigma})$, and consequently 
 $$ w \in C^0([\epsilon, T]\times M), \qquad \sup_{(t,x)\in (\epsilon, T)\times E} |w| (t,x) \leq C \| w_0 \|_{L^2(M)}$$
 Consider the set 
 $$X= \{ e^{(T-t)\Delta} w_0 \mid_{J\times E} \textrm{ such that } w_0 \in L^2(M)   \}. $$
 We endow $X$ with the norm inherited from  $L^1((0,T);L^\infty (M))$ and have 
 $$\| w \|_{X} =\| w \|_{L^1(J; L^\infty(  E))} \leq C \sup_{(t,x)\in (\epsilon, T)\times E} |w| (t,x)\leq C \| v_0 \|_{L^2(M)}.$$
By the observation estimate~\eqref{obsbis}, applied to $\widetilde{J} \times E, \widetilde{J} = T \setminus J$, we have 
\begin{equation}
 \label{observation-raf}
  \| w\mid_{t=0} \|_{L^2(M)} \leq C\int_{\widetilde{J} }\| e^{s\Delta} w_0\|_{L^\infty (E) }ds = C\int_{J} \| e^{(T-t)\Delta} w_0\|_{L^\infty (E) }dt\leq C\|w \|_{X}. 
  \end{equation}
As a consequence, for any $u_0,v_0 \in L^2(M)$, the map 
$$ w \in X \mapsto \bigl( w\mid_{t=0}, u_0-v_0 \bigr)_{L^2(M)} $$ is well defined because if $w_1= w_2 \in X$, then from~\eqref{observation-raf}, $w_1 \mid_{t=0} = w_2 \mid_{t=0}$. Also from~\eqref{observation-raf}, this map is a continuous linear form on $X$. By Hahn-Banach Theorem, there exists an extension as a continuous linear form to the whole space 
$$L^1((0,T); C^0 (E)). $$  
 By Riesz representation Theorem, there exists 
$$ \mu\in L^\infty ((0,T); \mathcal{M}(E))$$ (here $\mathcal{M}(E)$ is the set of Borel measures on the metric space $E$) such that 
this linear form is given~by 
$$ w\in L^1((0,T); C^0 (E)) \mapsto \int_{(0,T) \times E}  w(t,x) d\mu.$$
We can extend $\mu$ by restriction to $L^1((0,T); C^0(M))$ in the following way  
$$ w \in L^1((0, T); C^0(M)) \mapsto \int_{(0,T) \times E}  w\mid_{((0,T) \times E)}(t,x) d\mu,
$$ 
which defines an element (still denoted by $\mu$) of $L^\infty ((0,T); \mathcal{M}(M))$, supported on $[\epsilon,T]\times E$ (here we have used that $E$ is a closed set by assumption).

Let us now check that the solution to 
\begin{equation}\label{eq-mesure}
\begin{gathered}
(\partial_t - \Delta ) z = \mu (t,x)1_{E\times (0,T)}, \quad z \mid_{t=0} = 0,\\
z \mid_{\partial M} =0 \text{ (Dirichlet condition) or } \partial_\nu z \mid_{\partial M} =0 \text{ (Neumann condition)},
\end{gathered}
\end{equation}
satisfies 
$$ z \mid_{t=T} =e^{T\Delta} (u_0-v_0), z\mid_{t=0} =0$$
and consequently choosing
$$u = e^{t\Delta} u_0 -z $$ proves the second part in Theorem~\ref{control-haus}.
First we have to make sense of~\eqref{eq-mesure} (and show that the right hand side  $\mu (t,x)1_{E\times (0,T)}$ is an admissible source term). 
The first step is to prove that $\mathcal{H}^\sigma$ is dense in $C^0(M)$ subject to Dirichlet boundary conditions (or Neumann in a sense to be precised). Of course, the set $\mathcal{H}^\sigma$ being defined in terms of the eigenfunctions of the Laplace operator with Dirichlet or Neuman boundary conditions depends on this choice of boundary conditions, and in the next lemma, we make this dependence explicit.
\begin{lem}
For all $\sigma >0$, the set $\mathcal{H}^\sigma _D$ is dense in the set of continuous functions on $\overline{M}$ vanishing on $\partial M$, while the set $\mathcal{H}^\sigma _N$  is dense in the set of continuous functions on $\overline{M}$.
\end{lem}
\begin{proof} Let $u_0 \in C^0(\overline{M})$ vanishing at the boundary $\partial M$.  Then the function defined on the double manifold by 
$$\widetilde{u}_0 (x, \pm 1) = \pm  u_0 (x)$$ 
is clearly continuous on the double manifold $\widetilde{M}$. We shall say that $\widetilde{u}_0$ is odd. 
Clearly the set of odd $C^1$ functions on $\widetilde{M}$ is dense in the set of $C^0$ odd  functions on $\widetilde{M}$. Now for any $\widetilde{v}_0 $ $C^1$ and odd, working in the double manifold, we can apply the maximum principle 
for the heat semigroup $(e^{t \widetilde{\Delta}})_{t\geq 0}$, whereby the family $(e^{t\widetilde{\Delta}} \widetilde{v}_0)_{t \geq 0}$ is uniformly bounded in $L^\infty(M)$ by $\| \widetilde{v}_0 \|_{L^\infty(M)}$. Then applying again the maximum principle to $(\nabla_x e^{t\widetilde{\Delta}} \widetilde{v}_0)_{t \geq 0}$ we get that $(e^{t\widetilde{\Delta}} \widetilde{v}_0)_{t \geq 0}$ is bounded in $W^{1, \infty}(\widetilde{M})$. It clearly converges to $v_0$ in $H^1(M)= W^{1,2}(M)$ when $t \rightarrow 0$ (by decomposition on the eigenbasis of $\widetilde{\Delta}$ defined in Remark~\ref{rema.eigen}), and consequently it converges to $u_0$ in $W^{1, p}(M)$ for all $2\leq p <+\infty$, which implies convergence to $\widetilde{v}_0$ in $C^0(M)$. Now the decomposition of $\widetilde{v}_0$ on the set of eigenfunctions of the Laplace operator $\widetilde{\Delta}$ involves only odd eigenfunctions hence eigenfunctions $\widetilde{e}$ which are of the form 
$$\widetilde{e} (x, \pm 1) = e(x),$$
where $e$ is an eigenfunction of the Laplace operator on $M$ with Dirichlet boundary conditions (see Remark~\ref{rema.eigen}). As a consequence, for any $t>0$, 
$$e^{t \widetilde{\Delta}} \widetilde{v}_0 \mid_{M\times \{1\}} = e^{t\Delta_D} v_0 \in \mathcal{H}_D^\sigma (M). $$
This implies that $\mathcal{H}_D^\sigma(M)$ is dense in the set of continuous functions in $\overline{M}$ vanishing on $\partial M$.. 
To prove that $\mathcal{H}_N^\sigma(M)$ is dense in the set of continuous functions in $\overline{M}$, we proceed similarly replacing the odd extension by the even extension
$$\widetilde{u}_0) (x, \pm 1) =   u_0 (x),$$  
which sends the set of continuous functions on $\overline{M}$ to the set of continuous functions on $\widetilde{M}$ (here we do not require the vanishing of $u_0$ at the boundary). 
\end{proof}
The density of $\mathcal{H}_N^{\sigma} $ in $C^0$ implies that the map 
$$\nu \in \mathcal{M}(M) \mapsto \widetilde{\nu}= \nu \mid_{\mathcal{H}_N^\sigma} \in \mathcal{H}_N^{- \sigma},$$
 is onto and consequently any measure $\nu \in \mathcal{M}(M)$  can be seen as an element of $\mathcal{H}_N^{-\sigma}$, the dual space of $\mathcal{H}_N^\sigma$. Respectively, since $\mathcal{H}_D^{\sigma} $ is dense in the set of functions vanishing on $\partial M$, 
any measure $\nu \in \mathcal{M}(M)$ supported away from the boundary  can be seen as an element of $\mathcal{H}_D^{-\sigma}$, the dual space of $\mathcal{H}_D^\sigma$.
As a consequence, we can solve ~\eqref{eq-mesure} by using the natural spectral decomposion in $\mathcal{H}^{-\sigma}$, i.e.,
$$ \mu = \sum_k \langle \mu, e_k\rangle (t) e_k, $$
with $\langle \mu, e_k\rangle (t)$ supported in $(\epsilon, T)$ and 
$$ \text{supess}_{t \in (0, T)} \sum_k \lambda_k ^{-2\sigma} |\langle \mu, e_k\rangle|^2 (t)  <+\infty.$$
Let $w_0 \in L^2(M)$ and let $w_N$ be the solution to~\eqref{eq-obs2} with $v_0$ replaced by 
$$w_{0,N}= \sum_{k\leq N} (w_0, e_k) e_k,$$
and $z_N$ the solution to~\eqref{eq-mesure}, where $\mu$ is replaced by 
$$\mu_N= \sum_{k\leq N}\langle \mu, e_k\rangle (t) e_k.$$
We have 
\begin{multline}\label{ipp}
0= \int_0^{T} \bigl( (\partial_t + \Delta ) w_N, z_N\bigr)_{L^2} =  \Bigl[\bigl(  w_N, z_N\bigr)_{L^2}\Bigr]_0^{T} - \int_0^{T} \bigl(  w_N, (-\partial_t + \Delta )z_N\bigr)_{L^2} \\
=  \bigl(  w_{0,N}, z_N\mid_{t=T}\bigr)_{L^2} - \int_0^{T} \bigl(  w_N, \mu_N\bigr)_{L^2}.
\end{multline}
We now let $N$ tend to infinity. Then 
\begin{equation}
\begin{gathered}
w_{0,N} \rightarrow w_0 \text{ in } L^2,\\
z_N \mid_{t=T} \rightarrow z \mid_{t=T} \text{ in }\mathcal{H}^{-\sigma} \Rightarrow z_N \mid_{t=T} \rightarrow z \mid_{t=T'} \text{ in }L^2,\\
w_N \rightarrow w \text{ in } C^0([0, T]; \mathcal{H}^\sigma),\\
\mu_N \rightarrow \mu \text{ in } L^\infty([0, T]; \mathcal{H}^{-\sigma}).
\end{gathered}
\end{equation}
We deduce that we can pass to the limit in~\eqref{ipp} and get 
$$0  =  \bigl(  w_{0}, z\mid_{t=T}\bigr)_{L^2} - \int_0^{T} w(t,x) 1_{t\in (0,T)}d \mu.$$
From the definition of $\mu$ we have 
$$ \int_0^{T} w(t,x) d \mu= \bigl( w\mid_{t=0}, u_0-v_0\bigr)_{L^2}=\bigl( e^{T\Delta} w_0, u_0-v_0\bigr)_{L^2}.$$
We finally get
$$ \forall w_0 \in L^2, \bigl(  w_{0}, z\mid_{t=T}\bigr)_{L^2}= \bigl( e^{T\Delta} w_0, u_0-v_0\bigr)_{L^2} \Rightarrow z\mid_{t=T}= e^{T \Delta}(u_0- v_0).$$
$u= e^{t\Delta} u_0- z$ satisfies the  second part of Theorem~\ref{control-haus}. 

We now turn to the second part in Theorem~\ref{control-hausbis} and highlight the modifications required in the proof above. We shall focus on the case $E= E_2$ and assume that $E$ satisfies~\eqref{eq.disjoint}. Let $J= \{ t_n, n\in \N\} \cup\{T\} \subset [t_0, T]$ (recall that $t_0 >0$), $\widetilde{J} = T-J= \{ s_n\} \cup \{0\}$. Let
$$X= \{ e^{(T-t)\Delta} v_0 \mid_{J\times E}, \textrm{ with } v_0 \in L^2(M)  \} \subset C^0(J \times E),$$ endowed with the $\sup$ norm.  Then  according to ~\eqref{obsterbis}, with $s_n = T- t_n$, the linear form
$$w\in X\mapsto \bigl( w\mid_{t=0}, u_0-v_0 \bigr)_{L^2(M)}$$
is well defined and continuous, and more precisely bounded by 
\begin{equation} \label{bornebis}C \sup_{n\in \N, x\in E} e^{-\frac{B}{ T- t_n}}|w(t_n,x)|.
\end{equation}
Indeed (notice that $T- t_n \geq t_{n+1} - t_n$),
$$\| w\mid_{t=0} \|_{L^2(M)}=   \| e^{T\Delta } v_0 \|_{L^2(M)}\leq C \sup_{n\in \N, x\in E} e^{-\frac{B}{ t_{n+1}- t_n}}|w(t_n,x)|\leq C \sup_{n\in \N, x\in E} e^{-\frac{B}{ T- t_n}}|w(t_n,x)|.$$
According to Hahn Banach theorem~\cite[Theorem 3.2]{Ru91}, we can extend this map to the whole space $C^0(J \times E),$ so that it is still bounded by~\eqref{bornebis}. By Riesz representation Theorem, this continuous linear form can be represented as a measure $\mu\in \mathcal{M}(J\times E)$ which still satisfies the same bound~\eqref{bornebis}. As previously we can extend this measure as a measure on $[0,T]\times M$ which is supported in $J\times E$. Hence this measure takes the form 
$$\mu = \sum_{n}\delta_{t= t_n} \otimes \mu_n + \delta_{t=T}\otimes \mu_\infty,$$ with $\mu_j, \mu_\infty$ measures on $M$ supported by $E$. Using~\eqref{bornebis} we get that 
$$ \sum_{n} e^{\frac{B}{T- t_n} }  |\mu_n| (E) <+\infty, \qquad\mu_{\infty} =0.$$
Now we can simply make sense of solving
$$(\partial_t - \Delta) z= \sum_{n} \delta_{t= t_n} \otimes \mu_n, z\mid_{ty=0} = 0,$$
with Dirichlet or Neumann boundary conditions
in $L^\infty([0,T); \mathcal{H}^{-\sigma})$, by simply noticing that the solution to this equation is the solution to the homogeneous heat equation on $(t_n, t_{n+1})$ which satisfies the jump condition
$$ z\mid_{t_n+0} - z\mid_{t_n -0} = \mu_n \in \mathcal{H}^{-\sigma}.$$
Since
 $$\sum_n \| \mu_n\|_{\mathcal{H}^{- \sigma}}\leq C\sum_n |\mu_n|(E)<+\infty,$$
we deduce that actually $\lim_{t\genfrac{}{}{0pt}{}{<}{\rightarrow}T} z(t)$ exists in $\mathcal{H}^{- \sigma}$, and consequently the solution exists and is unique in $[0, +\infty)$ (defined on $[T,+\infty)$ as the solution  of the homogeneous heat equation). We now write the analog of the integration by parts formula~\eqref{ipp}.
Let $z_N, w_N$ and $\mu_{n,N}$ be the projections of$z,v$ and $\mu_n$ on the space spanned by the $N$ first eigenfunctions. On $(t_n, t_{n+1})$, we have 
\begin{multline}\label{ippbis}
0= \int_{t_n}^{t_{n+1}} \bigl( (\partial_t + \Delta ) w_N, z_N\bigr)_{L^2} =  \Bigl[\bigl(  w_N, z_N\bigr)_{L^2}\Bigr]_{t_n}^{t_{n+1}} - \int_{t_n}^{t_{n+1}} \bigl(  w_N, (-\partial_t + \Delta )z_N\bigr)_{L^2} \\
=  \bigl(  w_{N}\mid_{t_{n+1}}, z_N\mid_{t=t_{n+1}-0}\bigr)_{L^2} - \bigl(  w_{N}\mid_{t_{n}}, z_N\mid_{t=t_{n}+0}\bigr)_{L^2}
\end{multline}
which implies (using that $z_N\mid_{t=0}=0$ and $\lim_{n \rightarrow +\infty} w_N\mid_{t=t_n} = w_N (T))$ the following
\begin{multline}\label{ippter}
0= \int_{0}^{T} \bigl( (\partial_t + \Delta ) w_N, z_N\bigr)_{L^2} =  \sum_{n}  \bigl(  w_{N}\mid_{t_{n+1}}, z_N\mid_{t=t_{n+1}-0}\bigr)_{L^2} - \bigl(  w_{N}\mid_{t_{n}}, z_N\mid_{t=t_{n}+0}\bigr)_{L^2}\\
= \lim_{k\rightarrow + \infty}  \bigl(  w_{N}(t_{k}), z_N\mid_{t=t_k-0}\bigr)_{L^2}+ \sum_n^{k-1} \bigl(  w_{0,N}\mid_{t_{n}}, z_N\mid_{t=t_{n}-0}-z_N\mid_{t=t_{n}+0}\bigr)_{L^2}\\
=  \bigl(  w_{N}(T), z_N\mid_{t=T}\bigr)_{L^2} - \sum_n  \bigl(  w_{0,N}\mid_{t_{n}}, \mu_{n,N}\bigr)_{L^2}\\
=  \bigl(  w_{0,N}, z_N\mid_{t=T}\bigr)_{L^2}-\int_0^{T}  w_N(t) d\mu_{N}\
\end{multline}
We can now pass to the limit $N \rightarrow + \infty$ and get
$$\bigl(  u_0 -v_{0}, z\mid_{t=T}\bigr)_{L^2}=-  \int_0^{T}  w(t) d\mu,$$
and we conclude as previously that $u= e^{t\Delta}u_0 -z$ satisfies (with Dirichlet or Neumann boundary conditions)
$$(\partial_t- \Delta) u = - \sum_{n} \delta_{t= t_n} \otimes \mu_n, \qquad u\mid_{t=0} = u_0, u\mid_{t=T} =v_0.
$$
This proves the second part in Theorem~\ref{control-hausbis}, in the case $E= E_2$.The case $E=E_1$ is proved similarly by replacing in the proof above~\eqref{obsterbis} by \eqref{obster}.

\def\cprime{$'$} \def\cprime{$'$}


\begin{thebibliography}{10}

\bibitem{AEWZ14}
J.~Apraiz, L.~Escauriaza, G.~Wang, and C.~Zhang.
\newblock Observability inequalities and measurable sets.
\newblock {\em J. Eur. Math. Soc. (JEMS)}, 16(11):2433--2475, 2014.

\bibitem{ApEs13}
Jone Apraiz and Luis Escauriaza.
\newblock Null-control and measurable sets.
\newblock {\em ESAIM Control Optim. Calc. Var.}, 19(1):239--254, 2013.

\bibitem{EsMoZh17}
Luis Escauriaza, Santiago Montaner, and Can Zhang.
\newblock Analyticity of solutions to parabolic evolutions and applications.
\newblock {\em SIAM J. Math. Anal.}, 49(5):4064--4092, 2017.

\bibitem{HaLi97}
Qing Han and Fanghua Lin.
\newblock {\em Elliptic partial differential equations}, volume~1 of {\em
  Courant Lecture Notes in Mathematics}.
\newblock Courant Institute of Mathematical Sciences, New York; American
  Mathematical Society, Providence, RI, second edition, 2011.

\bibitem{JeLe99}
David Jerison and Gilles Lebeau.
\newblock Nodal sets of sums of eigenfunctions.
\newblock In {\em Harmonic analysis and partial differential equations
  ({C}hicago, {IL}, 1996)}, Chicago Lectures in Math., pages 223--239. Univ.
  Chicago Press, Chicago, IL, 1999.

\bibitem{LeMo19}
G.~Lebeau and I.. Moyano.
\newblock Spectral inequalities for the schr{\"o}dinger operator.
\newblock {\em Preprint}, (arXiv:1901.03513), 2019.

\bibitem{LeRo95-1}
G.~Lebeau and L.~Robbiano.
\newblock Contr{\^ o}le exact de l'{\'e}quation de la chaleur.
\newblock {\em Communications in Partial Differential Equation}, 20:335--356,
  1995.

\bibitem{Lo18-1}
A.~Logunov.
\newblock Nodal sets of {L}aplace eigenfunctions: polynomial upper estimates of
  the {H}ausdorff measure.
\newblock {\em Ann. of Math. (2)}, 187(1):221--239, 2018.

\bibitem{Lo18}
A.~Logunov.
\newblock Nodal sets of {L}aplace eigenfunctions: proof of {N}adirashvili's
  conjecture and of the lower bound in {Y}au's conjecture.
\newblock {\em Ann. of Math. (2)}, 187(1):241--262, 2018.

\bibitem{LoMa17}
A.~Logunov and E.~Malinnikova.
\newblock Quantitative propagation of smallness for solutions of elliptic
  equations.
\newblock {\em Preprint, Arxiv}, (arXiv:1711.10076), 2017.

\bibitem{mi10}
Luc Miller.
\newblock A direct {L}ebeau-{R}obbiano strategy for the observability of
  heat-like semigroups.
\newblock {\em Discrete Contin. Dyn. Syst. Ser. B}, 14(4):1465--1485, 2010.

\bibitem{PhWa13}
Kim~Dang Phung and Gengsheng Wang.
\newblock An observability estimate for parabolic equations from a measurable
  set in time and its applications.
\newblock {\em J. Eur. Math. Soc. (JEMS)}, 15(2):681--703, 2013.

\bibitem{Ru91}
Walter Rudin.
\newblock {\em Functional analysis}.
\newblock International Series in Pure and Applied Mathematics. McGraw-Hill,
  Inc., New York, second edition, 1991.

\end{thebibliography}
\end{document}